 \newtheorem{theorem}{Theorem}[section]
 \newtheorem{cor}[theorem]{Corollary}
 \newtheorem{lemma}[theorem]{Lemma}
 \newtheorem{proposition}[theorem]{Proposition} \theoremstyle{definition}
 \newtheorem{definition}[theorem]{Definition}
 \theoremstyle{definition}
 \newtheorem{example}[theorem]{Example}
 \theoremstyle{remark}
 \numberwithin{equation}{section}
\numberwithin{equation}{section}
\newcommand{\Ind}{\operatorname{Ind}}
\newcommand{\Hom}{\operatorname{Hom}}
\newcommand{\End}{\operatorname{End}}
\newcommand{\Coker}{\operatorname{Coker}}
\newcommand{\soc}{\operatorname{soc}}
\newcommand{\Proj}{\ensuremath{\operatorname{Proj}}}
\newcommand{\Inj}{\ensuremath{\operatorname{Inj}}}
\newcommand{\w}{\ensuremath{\widetilde}}
\newcommand{\F}{\ensuremath{\mathbf{k}}}
\newcommand{\Z}{\ensuremath{\mathbb{Z}}}
\newcommand{\Rmod}{\ensuremath{R\text{-mod}}}
\newcommand{\kmod}{\ensuremath{\mathbf{k}\text{-mod}}}
\newcommand{\kMod}{\ensuremath{\mathbf{k}\text{-Mod}}}
\newcommand{\ModF}{\ensuremath{\text{Mod-}F}}
\newcommand{\modF}{\ensuremath{\text{mod-}F}}
\newcommand{\RMod}{\ensuremath{R\text{-Mod}}}
\newcommand{\C}{\ensuremath{\mathbb{C}}}
\newcommand{\Cb}{\ensuremath{\operatorname{C}^b}}
\newcommand{\Kb}{\ensuremath{\operatorname{K}^b}}
\newcommand{\DA}{\ensuremath{\operatorname{D}^b(\A)}}
\newcommand{\KR}{\ensuremath{\operatorname{K}^b}(\Rmod)}
\newcommand{\Ker}{\operatorname{Ker}}
\newcommand{\Jm}{\operatorname{Im}}
\newcommand{\Id}{\operatorname{Id}}
\newcommand{\FF}{\operatorname{\mathbb{F}}}
\newcommand{\g}{\ensuremath{\mathfrak{g}}}
\newcommand{\Cab}{\ensuremath{\mathcal{C}}}
\newcommand{\KC}{\ensuremath{\operatorname{K}^b(\mathcal{C})}}
\newcommand{\DC}{\ensuremath{\operatorname{D}^b(\mathcal{C})}}
\newcommand{\Ss}{\ensuremath{\mathbb{S}}}
\newcommand{\V}{\ensuremath{\mathbb{V}}}
\newcommand{\VA}{\ensuremath{\mathbb{V|_{\A}}}}
\newcommand{\BGG}{\ensuremath{\mathcal{O}}}
\newcommand{\Loc}{\ensuremath{\operatorname{L}}}
\newcommand{\A}{\ensuremath{\mathcal{A}}}
\newcommand{\m}{\ensuremath{\mathfrak{m}}}
\newcommand{\Ob}{\ensuremath{\operatorname{Ob}}}
\newcommand{\pa}{\ensuremath{\partial}}
\newcommand{\Pp}{\ensuremath{P}}
\newcommand{\HHom}{\ensuremath{\mathcal{H}om}}
\begin{document}
\author{Erik Backelin and Omar Jaramillo}
\title[Auslander-Reiten sequences]{Auslander-Reiten sequences and $t$-structures
on the homotopy category of an abelian category.}
\address{Departamento de Matem\'{a}ticas, Universidad de los Andes,
Carrera 1 N. 18A - 10, Bogot\'a, COLOMBIA}
\email{erbackel@uniandes.edu.co}
\address{Departamento de Matem\'{a}ticas, Universidad de los Andes,
Carrera 1 N. 18A - 10, Bogot\'a, COLOMBIA}
 \email{od.jaramillo114@uniandes.edu.co}
 \subjclass[2000]{Primary 16G10, 16G70, 18G99}

\keywords{}
\maketitle
\begin{abstract} We use $t$-structures on the homotopy category $\KR$ for an artin algebra $R$ and
Watts' representability theorem to give an existence proof for
Auslander-Reiten sequences of $R$-modules. This framework
naturally leads to a notion of generalized (or higher)
Auslander-Reiten sequences.
\end{abstract}

\section{Introduction}
Krause, \cite{K}, \cite{K2}, used Brown's representability theorem
for triangulated categories to give a short proof for the
existence of Auslander-Reiten (abbreviated AR) triangles. The
present article is a variation of that theme. We use
$t$-structures to define an abelian category $\A$ where
AR-sequences naturally occur as simple objects. We apply Watts'
representability theorem to $\A$ to reprove the existence of
AR-sequences for modules over an artin algebra $R$.

To set up the general constructions $\Cab$ will denote an abelian
category;  later on $\Cab$ will be $\Rmod$.

Let $\Cb(\Cab)$ be the category of bounded cochain complexes in
$\Cab$ and let $\KC$ be the category of cochain complexes with
morphisms modulo homotopy. We consider a $t$-structure $(D^{\leq
0}, D^{\geq 0})$ on $\KC$ which is \emph{standard} in the sense
that the localization functor maps it to the tautological
$t$-structure on the bounded derived category $\DC$, see
Proposition \ref{p1}. (We briefly investigate other standard
$t$-structures on $\KC$ as well.)

Let $\A = \A(\Cab) = D^{\leq 0} \cap D^{\geq 0}$ be the heart of
the $t$-structure. This is an abelian category whose objects are
complexes
$$[A \overset{f}{\to} B \overset{g}{\to} C]$$
such that $f$ is injective and $\Ker g = \Jm f$.

We observe that the category $\A$ is naturally equivalent to a
subcategory of the category $\mathfrak{Ab}^{\Cab^{op}}$ of
functors from $\Cab$ to the category of abelian groups. Moreover,
in this way the functor $C \mapsto P_C := [0 \to 0 \to C]$
corresponds to the Yoneda embedding $\Cab \to
\mathfrak{Ab}^{\Cab^{op}}$. We prove that $\DA = \KC$ and describe
injective objects of $\A$.

In the case when $\Cab$ is a finite length category we shall see
that simple objects of $\A$ - if they exist - are given by
Auslander and Reiten's almost split right maps, see Section
\ref{simple object section}.

The functor category $\mathfrak{Ab}^{\Cab^{op}}$ has since
Auslander been a central tool in AR-theory. Our $\A$ provides
merely a different realization of it, but a realization that we
prefer because it is more intuitive and from its definition it is
clear that it lives naturally inside the triangulated category
$\KC$. For instance, Auslander's defect of a short exact sequence,
living in $\mathfrak{Ab}^{\Cab^{op}}$, corresponds to the homotopy
class of the sequence in $\A$.

Now we specialize to the case $\Cab = \Rmod$. The main ingredient
in Krause's existence proof for AR-triangles in a triangulated
category was a Serre duality functor and the existence of this he
deduced from Brown's representability theorem following ideas of
Neeman, \cite{N}, \cite{N2}.

With this in mind it clear that in our setup the existence of
AR-sequences would follow from a Serre duality functor on $\KR$,
because one could use the $t$-structure to truncate it down to an
Auslander-Reiten type duality between short exact sequences (see
corollary \ref{AL corr in good language} for the precise meaning
of such a duality).

However, we cannot deduce the existence of this Serre duality from
Brown's representability theorem, because it is not known to us
whether $\operatorname{K}(\RMod)$ is well-generated (compare with
\cite{H-J}). Instead, our approach is the (far more elementary)
theory of abelian categories.

We use Watts' theorem to prove that any projective object $P_C$ in
the abelian category $\A$ has a ``Serre dual" object $S P_C \in
\A$; this implies a Serre duality on $\KR$, see Proposition
\ref{Serre dual on Kb}, and also AR-duality. From the latter we
deduce, with a proof similar to the one given in \cite{ARS}, the
existence of AR-sequences, Theorem \ref{theorem main}. We also
interpret the AR-sequence with end term $C$ as $\Jm \tau$, where
$\tau: P_C \to S P_C$ is a certain minimal map.

In fact, in order to prove these existence and duality theorems
for AR-sequences we could have altogether avoided to mention
triangulated categories and worked, ad hoc, within the abelian
category $\A$. However, the viewpoint of $t$-structures on $\KC$
is very valuable. It seems to be a natural source of the theory
and it allows us to rediscover or reinterpret familiar notions in
AR-theory, like the dual of the transpose, the defect and
projectivization. It also naturally generalizes:

\medskip

\noindent There exists a notion of higher AL-sequences (see
\cite{I}). In Section \ref{Generalized AR-sequences} we propose
another method to generalize AR-theory. We  define a
\emph{generalized AR-sequence} to be a simple objects in the heart
of a certain $t$-structure on $\KC$, where $\Cab$ is an additive
(not necessarily abelian) category. It is certain that these
generalized AR-sequences and the higher AR-sequences of \cite{I}
are intimately relate (perhaps they coincide), but we haven't
worked this out.

There are interesting examples of generalized AR-sequences. For
instance, Soergel's theory of coinvariants, \cite{S}, shows that a
block in the BGG category $\BGG$ of representations of a complex
semi-simple Lie algebra $\g$ is equivalent to a category of
generalized AR-sequences in $\KC$, with $\Cab$ is a certain
category of modules over the cohomology ring of the flag manifold
of $\g$. See Section \ref{Soergel section} for a discussion.

\smallskip

\noindent We approach some more themes: We discuss the rather
obvious fact why $\A$ fails to be a noetherian category and give a
brief discussion of duality on $\A$ in the case when $R$ is a
Frobenius algebra.
\subsection{Acknowledgements} We thank Chaitanya Guttikar and the referee for useful comments.

\section{Standard $t$-structures on $\KC$}
\subsection{Notations} We
denote by $\Ind(\Lambda)$ the class of iso-classes of
indecomposable objects in a category $\Lambda$. $\Proj(\Lambda)$
and $\Inj(\Lambda)$ are the full subcategories of $\Lambda$ whose
objects are projective and injective, respectively. Let $R$ be a
ring; $\Rmod$, $\text{mod-}R$, $\RMod$ and $\text{Mod-}R$, denote
the categories of finitely generated left, finitely generated
right, all left and all right $R$-modules, respectively.

\subsection{A standard $t$-structure on $\KC$ and its heart}
Let $\Cab$ be an abelian category. In this section we investigate
a specific standard $t$-structure on $\KC$ and its heart. We
briefly discuss two other specific standard $t$-structures and the
existence of more. We also give an example from representation
theory where standard $t$-structures naturally occur.

We follow the notations of \cite{KS} concerning $t$-structures and
triangulated categories. See also \cite{GM} and \cite{N2} for more
details. We define a $t$-structure $(D^{\leq 0}, D^{\geq 0})$ on
$\KC$ as follows. Put
\begin{equation}\label{leq0}
D^{\leq 0} = \{X \in \KC;\,X^i = 0 \hbox{ for } i > 0\}
\end{equation}
Here $X = \{X^i,d^i\}$. (To be more precise, the objects of
$D^{\leq 0}$ are complexes homotopic to the right hand side of
\ref{leq0}, but we omit this kind of linguistic precision.) Put
\begin{equation}\label{geq0}
D^{\geq 0} = \{X \in \KC; \, X^i = 0 \hbox{ for } i < -2, \;
H^{-2}(X) = H^{-1}(X) = 0\}
\end{equation}

\begin{proposition}\label{p1} $(D^{\leq 0}, D^{\geq 0})$
is a bounded standard $t$-structures on $\KC$.
\end{proposition}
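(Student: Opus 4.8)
The plan is to verify the four conditions defining a bounded standard $t$-structure: the two inclusions $D^{\leq 0} \subseteq D^{\leq 1}$ and $D^{\geq 1} \subseteq D^{\geq 0}$, the orthogonality $\Hom_{\KC}(X,Y) = 0$ for $X \in D^{\leq 0}$ and $Y \in D^{\geq 1}$, and the existence, for every $X$, of a distinguished triangle $A \to X \to B \to A[1]$ with $A \in D^{\leq 0}$ and $B \in D^{\geq 1}$; here $D^{\geq 1} = D^{\geq 0}[-1]$ consists of complexes $Y$ with $Y^i = 0$ for $i < -1$ and $H^{-1}(Y) = H^0(Y) = 0$. The two inclusions are immediate from the definitions. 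Boundedness holds because every object of $\KC = \Kb(\Cab)$ is a bounded complex, hence lies in $D^{\leq n} \cap D^{\geq -n}$ for $n$ large. The $t$-structure is standard because a complex concentrated in degrees $\leq 0$ has cohomology in degrees $\leq 0$, and one supported in degrees $\geq -2$ with $H^{-2} = H^{-1} = 0$ has cohomology in degrees $\geq 0$; thus the localization $\KC \to \DC$ carries $(D^{\leq 0}, D^{\geq 0})$ into the tautological $t$-structure.

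For orthogonality I would replace $X$ and $Y$ by representatives supported in degrees $\leq 0$ and $\geq -1$ respectively, so that a cochain map $f : X \to Y$ has only the components $f^{-1}$ and $f^0$ possibly nonzero. The key observations are that $H^{-1}(Y) = 0$ forces $d_Y^{-1} : Y^{-1} \to Y^0$ to be a monomorphism, while $H^0(Y) = 0$ gives $\Ker d_Y^0 = \Jm d_Y^{-1}$. Since $d_Y^0 f^0 = 0$, the map $f^0$ factors through the subobject $\Ker d_Y^0 = \Jm d_Y^{-1}$ of $Y^0$; as $d_Y^{-1}$ is a monomorphism onto its image, this factorization lifts to $h^0 : X^0 \to Y^{-1}$ with $d_Y^{-1} h^0 = f^0$. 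Setting all other homotopy components to zero, the cochain-map identity in degree $-1$ together with injectivity of $d_Y^{-1}$ yields $f^{-1} = h^0 d_X^{-1}$, so $h^0$ is a null-homotopy of $f$. This argument is purely diagrammatic and hence valid in any abelian $\Cab$.

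For the decomposition triangle I would take $A := \tau^{\leq 0} X$, the good truncation (supported in degrees $\leq 0$, hence in $D^{\leq 0}$), let $\iota : A \to X$ be the inclusion, and set $B := \operatorname{Cone}(\iota)$, so that $A \to X \to B \to A[1]$ is distinguished in $\KC$. The remaining point, and the main obstacle, is to show that $B \in D^{\geq 1}$. One cannot simply pass to the naive quotient $X/A$, since the sequence $0 \to A \to X \to X/A \to 0$ need not be termwise split, so $B$ is in general only quasi-isomorphic, not homotopy equivalent, to $X/A$; indeed a bounded complex with cohomology in degrees $\geq 1$ need not be homotopy equivalent to a complex supported in those degrees. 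Instead I would exploit the explicit form of the cone: for each $i \leq -2$ one has $B^i = A^{i+1} \oplus X^i$ and the component $A^{i+1} \xrightarrow{\iota^{i+1}} X^{i+1}$ of the differential is the identity. Iterated Gaussian elimination of these identity blocks, a formal operation valid in any additive category, produces a homotopy equivalent complex $\widetilde B = [\Ker d_X^0 \hookrightarrow X^0 \xrightarrow{d_X^0} X^1 \to \cdots]$ supported in degrees $\geq -1$, and a direct cohomology computation (equivalently, the long exact sequence of the triangle) gives $H^{-1}(\widetilde B) = H^0(\widetilde B) = 0$. Hence $B \simeq \widetilde B \in D^{\geq 1}$, which completes the verification of the $t$-structure axioms.
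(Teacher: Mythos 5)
Your proof is correct and takes essentially the same route as the paper's: both build the decomposition triangle from the kernel-truncation functor (the paper uses $\tau^{<0}$ rather than your $\tau^{\leq 0}$, a cosmetic degree shift) and then show the mapping cone of the truncation map is homotopy equivalent to a complex of the form $[\Ker d \hookrightarrow X^{\bullet}]$ lying in the right half of the $t$-structure --- your ``Gaussian elimination'' of identity blocks is exactly the paper's explicit splitting of the cone as $M \cong M' \oplus M''$ with $M'$ null-homotopic. The only substantive difference is that you spell out the orthogonality $\Hom_{\KC}(X,Y)=0$ for $X \in D^{\leq 0}$, $Y \in D^{\geq 1}$ in full diagrammatic detail, whereas the paper asserts it ``follows from the definitions.''
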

\begin{proof}
\noindent \textbf{a)} For a morphism $f: (X,d_X) \to (Y, d_Y)$ in
$\KC$ the mapping cone $M(f)$ is defined by
\begin{equation}\label{Mf}
M(f)^n = X^{n+1} \oplus Y^n
\end{equation}
with differential given by $d_{M(f)}(x_{n+1},y_n) = (-d_Xx_{n+1}+
f(y_n), d_Y y_n)$. Note that the inclusion $D^{\leq 0}
\hookrightarrow \KC$ has as right adjoint the truncation functor
$\tau^{\leq 0}$ which is defined by $\tau^{\leq 0}(X) = $
\begin{equation}\label{form1}
\ldots \to X^{-n} \overset{d^{-n}}{\to}  \ldots \to X^{-2}
\overset{d^{-2}}{\to} X^{-1} \overset{d^{-1}}{\to} \Ker d^0 \to 0
\end{equation}
For $X \in \KC$, let $\alpha_X: \tau^{< 0}(X) \to X$ be the
natural map.

\smallskip

\noindent \textbf{b)} We show that $D^{\geq 0} = \{M(\alpha_X); X
\in \KC\}$. For $X \in \KC$ we have $M := M(\alpha_X) =$
\begin{equation}\label{c1}
\to X^{-n+1} \oplus X^{-n} \to \ldots \to X^{-2}\oplus X^{-3} \to
\Ker d^{-1} \oplus X^{-2} \to
 X^{-1} \to X^0 \to \ldots
\end{equation}
Hence $M \cong M' \oplus M''$ where $M'$ is the $0$-homotopic
complex
\begin{equation}\label{c2}
\ldots \to X^{-n+1} \oplus X^{-n} \to \ldots \to X^{-2}\oplus
X^{-3} \to X^{-2} \to 0
\end{equation}
where all differential are the same as in \ref{c1} except $d:
X^{-2}\oplus X^{-3} \to X^{-2}$ which is given by
$d(x_{-2},x_{-3}) = x_{-2} -dx_{-3}$. $M''$ is the subcomplex
\begin{equation}\label{c3}
0 \to \Ker d^{-1} \to X^{-1} \to X^0 \to \ldots
\end{equation}
of $M$. Thus $M \cong M''$ in $\KC$ which proves the statement of
\textbf{b)}.

\smallskip

\noindent \textbf{c)} It follows from \textbf{b)} that any $X \in
\KC$ fits into a distinguished triangle $\tau^{<0}X \to X \to M
\overset{+1}{\to}$ where $\tau^{<0}X \in D^{< 0}$ and $M \in
D^{\geq 0}$. It follows from the definitions that $\Hom_{\KC}(X,Y)
= 0$ for $X \in D^{\leq 0}$ and $Y \in D^{\geq 1}$. Thus $(D^{\leq
0}, D^{\geq 0})$ is a $t$-structure; clearly it is standard. Note
also that any object of $\KC$ belongs to $D^{\leq a} \cap D^{\geq
b}$ for some $a,b \in \Z$ which means that the $t$-structure is
bounded.
\end{proof}

It follows from Proposition \ref{p1} that the inclusion functor
$D^{\geq 0} \hookrightarrow \KC$ has the left adjoint $\tau^{\geq
0}$ where $\tau^{\geq 0}(X)$ is the complex \ref{c3}.

Let $\A = D^{\leq 0}\cap D^{\geq 0}$ be the heart of the
$t$-structure. Thus objects of $\A$ are sequences
\begin{equation}\label{heartA}
[A \overset{f}{\to} B \overset{g}{\to} C]
\end{equation}
such that $f$ is injective and $\Ker g = \Jm f$. (We use the
square-brackets to stress that we consider an object in the
abelian category $\A$.) Morphisms in $\A$ are morphisms of
complexes up to homotopy.

Notice that $[A \to B \to C] \cong 0$ iff $B \to C$ is a split
surjection.

We next describe the kernels, images and cokernels in $\A$. Fix a
morphism
$$
\phi =(\phi_A, \phi_B, \phi_C): [A \to B \overset{f}{\to} C] \to
[A' \to B' \overset{f'}{\to} C']
$$

Let $M = M(\phi)$ be the mapping cone of $\phi$ (considered as a
morphism in $\KC$). Then we have $\Ker(\phi) = \tau^{\leq
0}(M[-1])$ and $\Coker(\phi) = \tau^{\geq 0}(M)$ and it follows
from this that
\begin{equation}\label{Ker2}
\Ker(\phi) = [\Ker \pi \to B \oplus \Ker d' \overset{\pi}{\to} C
\times_{C'} B']
\end{equation}
\begin{equation}\label{CoKer2}
\Coker(\phi) = [\Ker \delta \to C\oplus B' \overset{\delta}{\to}
C']
\end{equation}
\begin{equation}\label{Im2}
\Jm(\phi) = [A' \to (C \times_{C'} B') \overset{\pi}{\to} C]
\end{equation}
Here, $C \times_{C'} B' = \{(c,b');\, \phi_C(c) = -f'(b')\}$,
$p(b,b') = (-f(b),b'+\phi_B(b))$, $\delta = \phi_C + f'$ and
$\pi(c,b') = c$. Notice that in the case that $f'$ is surjective,
$\Jm \phi$ is just the pull-back of an exact sequence by $C$.

The localization functor $\Loc: \KC \to \DC$ induces a functor on
hearts
\begin{equation}\label{Lheart}
\Loc \vert_{\A}: \A \to \Cab
\end{equation}
given by $\Loc \vert_{\A}([A \to B \overset{d}{\to} C]) = \Coker d$.
$\Loc \vert_{\A}$ is exact since $\Loc$ is $t$-exact.
\begin{definition}\label{A 0} Let $\A^0$ be the full subcategory of $\A$ consisting of
short exact sequences.
\end{definition}
It follows from the exactness of $\Loc \vert_{\A}$ that $\A^0$ is
an exact abelian subcategory of $\A$.
\begin{definition}\label{functor P} Define a fully faithful
functor by $$\Pp: \Cab \to \A, \; A \mapsto \Pp_A := [0 \to 0 \to
A]$$
\end{definition}

Notice that for any object $[A \to B \to C]$ of $\A$ there is a
canonical exact sequence
\begin{equation}\label{standard resolution}
0 \to P_A \to P_B \to P_C \to [A \to B {\to} C] \to 0
\end{equation}
This shows that the homological dimension of $\A$ is $\leq 2$; it
is easy to see that strict inequality holds iff $\Cab$ is
semi-simple and in that case $\A$ is also semi-simple.

Let $\mathfrak{Ab}$ be the category of abelian groups and let
$\mathfrak{Ab}^{\Cab^{op}}$ be the abelian category of additive
contravariant functors from $\Cab$ to $\mathfrak{Ab}$. Then there
is the fully faithful Yoneda embedding $h: \Cab \to
\mathfrak{Ab}^{\Cab^{op}}$ defined by $A \mapsto h_A :=
\Hom_\Cab(A, \;)$. By the Yoneda lemma we have that $h_A \in
\Proj(\mathfrak{Ab}^{\Cab^{op}})$ for all objects $A$ of $\Cab$
and that $\mathfrak{Ab}^{\Cab^{op}}$ is generated by the
collection $\{h_A; A \in \Cab\}$.
\begin{proposition} There is an exact fully faithful functor $\pi: \A \to
\mathfrak{Ab}^{\Cab^{op}}$ such that $P_A \mapsto h_A$.
\end{proposition}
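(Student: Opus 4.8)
The plan is to set, for an object $X=[A\overset{f}{\to}B\overset{g}{\to}C]$ of $\A$,
\[
\pi(X):=\Coker\bigl(h_B\overset{g_*}{\longrightarrow}h_C\bigr),
\]
the cokernel in $\mathfrak{Ab}^{\Cab^{op}}$ of the map of representables induced by $g$. Reading off morphisms in $\A$ (chain maps modulo homotopy) one gets the alternative description
\[
\pi(X)(D)=\Coker\bigl(\Hom_\Cab(D,B)\to\Hom_\Cab(D,C)\bigr)=\Hom_\A(\Pp_D,X),
\]
naturally in $D$, so that $\pi(X)=\Hom_\A(\Pp_{(-)},X)$. I would take this last formula as the working definition: it exhibits $\pi$ as an additive functor $\A\to\mathfrak{Ab}^{\Cab^{op}}$ with no separate homotopy-invariance check (since $\Hom_\A(\Pp_D,-)$ is already a functor on the heart), and Yoneda gives $\pi(\Pp_A)(D)=\Hom_\Cab(D,A)=h_A(D)$, i.e.\ $\pi(\Pp_A)=h_A$.

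For exactness I would use that a sequence in $\mathfrak{Ab}^{\Cab^{op}}$ is exact iff it is exact after evaluation at each $D\in\Cab$. As $\Hom_\A(\Pp_D,-)$ is automatically left exact, $\pi$ is exact the moment every $\Pp_D$ is projective in $\A$, and this is the heart of the matter. To see $\Pp_D$ is projective, let $b\colon Y\to Z$ be an epimorphism in $\A$, write $Z=[A_Z\to B_Z\overset{g_Z}{\to}C_Z]$, and let $\beta\colon\Pp_D\to Z$ be represented by $\beta_C\colon D\to C_Z$. Since $b$ is epi, $\Coker b\cong0$, so by the cokernel formula \eqref{CoKer2} and the criterion that $[A\to B\to C]\cong0$ iff its second map splits, the map $\delta=(b_C,g_Z)\colon C_Y\oplus B_Z\to C_Z$ is a split surjection. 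Choosing a section $(\sigma_1,\sigma_2)$ of $\delta$, the morphism $\sigma_1\beta_C\colon D\to C_Y$ represents a lift $\widetilde\beta\colon\Pp_D\to Y$, because $\beta_C-b_C\sigma_1\beta_C=g_Z(\sigma_2\beta_C)$ lies in the image of $(g_Z)_*$ and hence $b\widetilde\beta=\beta$ in $\A$. Thus $\Pp_D$ is projective and $\pi$ is exact.

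Full faithfulness I would obtain from Yoneda together with the defining conditions $f$ monic and $\Ker g=\Jm f$ on objects of $\A$. Fix $\phi=(\phi_A,\phi_B,\phi_C)\colon X\to X'$, with $X'=[A'\overset{f'}{\to}B'\overset{g'}{\to}C']$. If $\pi(\phi)=0$, evaluating at $C$ and following the class of $\mathrm{id}_C$ gives $\phi_C=g's^0$ for some $s^0\colon C\to B'$; then $g'(\phi_B-s^0g)=0$, so $\phi_B-s^0g$ factors through $\Ker g'=\Jm f'$ as $f's^{-1}$ for a unique $s^{-1}\colon B\to A'$, and injectivity of $f'$ forces $\phi_A=s^{-1}f$. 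Hence $(s^{-1},s^0)$ is a homotopy and $\phi=0$, proving faithfulness. For fullness, given a natural transformation $\Psi\colon\pi(X)\to\pi(X')$, choose a representative $\phi_C\colon C\to C'$ of $\Psi_C([\mathrm{id}_C])\in\pi(X')(C)$; naturality of $\Psi$ along $g$, combined with $\pi(X)(g)([\mathrm{id}_C])=[g]=0$, shows $\phi_Cg=g'\phi_B$ for some $\phi_B\colon B\to B'$, and $\phi_A$ is then determined on $\Ker g=\Jm f$. One verifies that $\phi=(\phi_A,\phi_B,\phi_C)$ is a morphism in $\A$ and that $\pi(\phi)=\Psi$, the latter because both natural transformations agree on the class of $\mathrm{id}_C$, which generates $\pi(X)$ as a quotient of $h_C$.

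I expect the exactness step to be the main obstacle. Full faithfulness is a fairly formal consequence of the Yoneda lemma and the exactness constraints already built into the objects of $\A$, whereas proving that each $\Pp_D$ is projective is the one place where the concrete structure of $\A$ --- the description of epimorphisms via \eqref{CoKer2} and the splitting criterion --- is genuinely used. It is worth noting that trying instead to deduce exactness by applying $\pi$ to the standard resolution \eqref{standard resolution} does not immediately work, since the Yoneda embedding $h$ is only left exact; isolating the projectivity of $\Pp_D$ is what makes the argument go through.
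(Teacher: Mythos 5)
Your proof is correct, and it starts from the same functor as the paper's proof ($\pi(X)=\Coker(h_B\to h_C)$, equivalently $D\mapsto\Hom_\A(P_D,X)$), but it establishes the two required properties by a genuinely different route. The paper is terse at exactly the two points you elaborate: it asserts exactness ``by construction'' and deduces full faithfulness from the Yoneda isomorphisms on the $P_A$'s plus the fact that these generate $\A$ (``general nonsense'', i.e.\ presentations $P_A\to P_B\twoheadrightarrow M$ and the five lemma, the same mechanism the paper later carries out explicitly in the proof of Lemma \ref{Morita lemma}). You instead (i) reduce exactness to projectivity of each $P_D$ in $\A$ and prove that directly from the cokernel formula \ref{CoKer2} together with the criterion that $[A\to B\to C]\cong 0$ iff the second map is a split surjection, and (ii) verify faithfulness and fullness by explicit chain-level homotopy computations. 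Point (i) is a real improvement in logical organization: in the paper, projectivity of the $P_A$'s is only obtained \emph{after} this proposition, as Corollary \ref{K equivalent to D} (via $h_A$ being projective in the functor category), so your direct argument both fills the gap hidden behind ``by construction'' and supplies that corollary independently, with no circularity. What the paper's route buys in exchange is brevity and uniformity --- the generators-plus-five-lemma d\'evissage is reused verbatim for Lemma \ref{Morita lemma} --- whereas your hands-on verification of full faithfulness, while complete and correct (your homotopy $(s^{-1},s^0)$ argument is exactly what ``morphisms modulo homotopy'' requires), is the longer way around for that half of the statement.
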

\begin{proof} Let $f: A \to B$ be a morphism in $\Cab$. Then we
must have $\pi([\Ker f \to A \to B]) = \Coker h_f$, where $h_f:
h_A \to h_B$ is given by $f$. By construction $\pi$ is exact and
by the Yoneda lemma we have that
$$
\Hom_\A(P_A,P_B) \cong \Hom_\Cab(A,B) \cong
\Hom_{\mathfrak{Ab}^{\Cab^{op}}}(h_A,h_B)
$$
for all objects $A,B$ in $\Cab$. Since the $P_A$ generates $\A$ as
a category the full faithfulness now follows from general
nonsense.
\end{proof}

\begin{cor}\label{K equivalent to D} For each $A \in \Cab$, $P_A \in \Proj(\A)$, and each projective of
$\A$ is isomorphic to some $P_A$. Hence, $\operatorname{D}^b(\A)$
is canonically equivalent to $\KC$.
\end{cor}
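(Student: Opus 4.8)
The plan is to prove Corollary \ref{K equivalent to D} in three stages: first establish that each $P_A$ is projective in $\A$, then show these exhaust the indecomposable projectives up to the identification just obtained, and finally invoke a standard tilting/derived-equivalence criterion to conclude $\operatorname{D}^b(\A) \cong \KC$. For the projectivity of $P_A$, I would argue that under the fully faithful embedding $\pi: \A \to \mathfrak{Ab}^{\Cab^{op}}$ we have $\pi(P_A) = h_A$, and by the Yoneda lemma $h_A$ is projective in the functor category. Since $\pi$ is exact and fully faithful, it reflects projectivity: any epimorphism $[A'\to B'\to C'] \tto M$ in $\A$ maps to an epimorphism in $\mathfrak{Ab}^{\Cab^{op}}$, a lift against $h_A$ exists there, and full faithfulness pulls it back to $\A$. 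Thus $P_A \in \Proj(\A)$.

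Next I would show every projective of $\A$ is some $P_A$. Here the canonical resolution \ref{standard resolution} is the key tool: every object $[A\to B\to C]$ admits a length-two projective resolution by objects of the form $P_{(-)}$, so the $P_A$ form a projective generating set. Given an arbitrary projective $Q \in \Proj(\A)$, take the surjection $P_C \tto Q$ coming from the last term of such a resolution (with $C = \Loc\vert_\A(Q)$); since $Q$ is projective this surjection splits, exhibiting $Q$ as a direct summand of some $P_C$. It then remains to check that direct summands of $P_C$ are again of the form $P_{A}$. This follows because $\End_\A(P_C) \cong \End_\Cab(C)$ via the fully faithful functor $\Pp$, so idempotents of $\End_\A(P_C)$ correspond to idempotents of $\End_\Cab(C)$, and the corresponding summands $P_A$ arise from the images of those idempotents in $\Cab$ (using that $\Cab$, being abelian, splits idempotents). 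Hence $Q \cong P_A$ for the appropriate summand $A$ of $C$.

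For the final equivalence $\operatorname{D}^b(\A) \cong \KC$, I would use the fact that $\A$ has enough projectives (by \ref{standard resolution}) and finite global dimension ($\leq 2$), so that the bounded homotopy category of projectives computes the bounded derived category: $\operatorname{D}^b(\A) \cong \Kb(\Proj \A)$. By the first two stages, $\Proj(\A)$ is precisely the additive category generated by the $P_A$, and the functor $\Pp$ together with the canonical identification of $\A$ with the heart of the standard $t$-structure shows that $\Kb(\Proj \A)$ is equivalent to $\KC$. Concretely, the embedding $\A \hookrightarrow \KC$ as the heart extends to a triangulated functor $\operatorname{D}^b(\A) \to \KC$ which sends projective resolutions to their total complexes; since it sends the generating projectives $P_A$ to the stalk complexes $[0\to 0\to A]$ and is fully faithful on these (as $\Hom_{\KC}([0\to0\to A],[0\to0\to B][n]) = 0$ for $n\neq 0$ by boundedness of the $t$-structure), it is an equivalence.

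The main obstacle I expect is the verification in the last paragraph that the comparison functor $\operatorname{D}^b(\A) \to \KC$ is an equivalence rather than merely fully faithful on the generators. The essential surjectivity requires knowing that every bounded complex in $\KC$ is quasi-isomorphic (in the sense appropriate to the $t$-structure) to a complex built from the $P_A$, which amounts to checking that the $P_A$ generate $\KC$ as a triangulated category; this should follow from the boundedness of the $t$-structure established in Proposition \ref{p1} together with the resolution \ref{standard resolution}, but making the dévissage precise — controlling the interplay between the homotopy category $\KC$ and the derived category $\operatorname{D}^b(\A)$ of its heart — is the delicate point.
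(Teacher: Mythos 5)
Your proposal is correct and takes essentially the same route as the paper's proof: projectivity of $P_A$ via the exact fully faithful embedding $\pi$ into $\mathfrak{Ab}^{\Cab^{op}}$ together with the Yoneda lemma, identification of an arbitrary projective $Q$ as some $P_A$ by splitting the surjection $P_C \tto Q$ coming from \ref{standard resolution} and transporting idempotents through the fully faithful functor $\Pp$, and finally $\operatorname{D}^b(\A) \cong \Kb(\Proj(\A)) \cong \KC$ using enough projectives, homological dimension $\leq 2$, and the additive equivalence $\Cab \cong \Proj(\A)$. Two harmless points: the parenthetical ``$C = \Loc\vert_{\A}(Q)$'' is a slip, since $\Loc\vert_{\A}$ returns the cokernel of the last differential rather than the last term of the complex (your argument never actually uses this identification); and the delicacy flagged in your final paragraph is unnecessary, because once $A \mapsto P_A$ is known to be an additive equivalence $\Cab \to \Proj(\A)$, applying $\Kb(-)$ to it is automatically a triangulated equivalence, so no realization functor or d\'evissage is needed.
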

\begin{proof}
It follows from the previous proposition that $P_A$ is projective
in $A$ since $h_A$ is projective in $\mathfrak{Ab}^{\Cab^{op}}$.
Thus $\A$ has enough projectives. Moreover, it is easy to see that
each projective in $\A$ must be of the form. Thus, $\Proj(\A)
\cong \Cab$ and hence
$$\operatorname{D}^b(\A) \cong \Kb(\Proj(\A)) \cong \KC.$$\end{proof}

\begin{example}\label{the coinvariants} Let $\FF$ be a field, $R=\FF[x]/(x^2)$, $\Cab =
\Rmod$ and let $\A$ be the heart of the $t$-structure from
Proposition \ref{p1}. Then $\A$ has five indecomposable objects:
$P_{\FF}$ (projective), $P_R$ (projective and injective), $[0 \to
\FF \to R]$ (simple), $[\FF \to R \to \FF]$ (simple) and $[\FF \to
R \to R]$ (injective).
\end{example}

\subsection{Other standard $t$-structures on $\KC$}\label{other
standards} In general $\KC$ will have infinitely many standard
$t$-structures. It would be interesting to classify them (and also to relate them to Bridgeland's stability theory, \cite{B}, that
classifies all bounded $t$-structures).

In this section we make no attempt to reach such a classification, but merely observe that besides the one we studied in the previous section there
are two other particulary evident standard $t$-structures. We
denote them by $(D'^{\leq 0}, D'^{\geq 0})$ and $(D''^{\leq 0},
D''^{\geq 0})$ and their hearts by $\A'$ and $\A''$, respectively.
They are defined as follows:
\begin{equation}\label{leq0'}
D'^{\leq 0} = \{X \in \KC;\,X^i = 0 \hbox{ for } i > 1 \hbox{ and
}d^0 \hbox{ is surjective }\}
\end{equation}
\begin{equation}\label{geq0'}
D'^{\geq 0} = \{X \in \KC;\,  X^i = 0 \hbox{ for } i <-1  \hbox{
and } d^{-1} \hbox{ is injective }\}
\end{equation}
Its heart is given by complexes
\begin{equation}\label{'heart} \A' = \{[A
\overset{g}{\to} B \overset{f}{\to} C];\; g \hbox{ is injective
and } f \hbox{ is surjective}\}
\end{equation}
The other one is defined by
\begin{equation}\label{leq0''}
D''^{\leq 0} = \{X \in \KC;\,X^i = 0 \hbox{ for } i > 2 \hbox{ and
} H^1(X) = H^2(X) = 0\}
\end{equation}
\begin{equation}\label{geq0''}
D''^{\geq 0} = \{X \in \KC; X^i = 0, \hbox{ for } i < 0\}
\end{equation}
\begin{equation}\label{''heart} \A'' =\{[A \overset{g}{\to} B
\overset{f}{\to} C];\; f \hbox{ is surjective and } \Ker f = \Jm
g\}
\end{equation}
The description of the $t$-structure $(D''^{\leq 0}, D''^{\geq
0})$ is dual to that of $(D^{\leq 0}, D^{\geq 0})$: The objects
$[A \to 0 \to 0]$ are injective in $\A''$, for $A \in \Cab$, and
the functor $A \mapsto [A \to 0 \to 0]$ corresponds to the Yoneda
embedding $\Cab \to ({\mathfrak{Ab}^{\Cab}})^{op}$ defined by $A
\mapsto \Hom_\Cab(\ ,A)$.

The embedding $\Cab \to \A'$, $A \mapsto [0 \to A \to 0]$ appears
to be a mix of the two Yoneda embeddings. We shall not investigate
these two $t$-structures any further in this paper.

We conclude this section with an example that shows there are many
standard $t$-structures on $\Cab$.
\begin{example}\label{manyt} Let $\FF$ be a field, $R=\FF[x]/(x^2)$ and let $\Cab = \Rmod$. Let $n \geq 2$ and let $V$ denote the acyclic complex
$$
0 \to \FF \hookrightarrow R \overset{x}{\to} R \overset{x}{\to}
\ldots \overset{x}{\to} R \twoheadrightarrow \FF \to 0
$$
where the component $\FF$ occurs in degree $0$ and $n$.  Let
$\Omega$ denote the set of all complexes in $\KR$ concentrated in
degrees $\leq 0$ together with the complex $V$.

It follows e.g. from \cite{Ay}, Proposition 2.1.70, that there is
a unique $t$-structure $(D^{\leq 0}_n, D^{\geq 0}_n)$ on $\KR$
such that $\Omega \subset D^{\leq 0}_n$ and
$$D^{\geq 0}_n = \{X \in \KR;\, \Hom_{\KR}(A[i], X) = 0, A
\in \Omega, i > 0\}.
$$
One can verify that this gives standard $t$-structures which are
different for different values of $n$.
\end{example}

\section{Injectives}
We describe the injectives in $\A$ and in $\A^0$. Contrary to the
case with projectives, in order for $\A$ to have enough injectives
we need that $\Cab$ has enough injectives.
\subsection{Injectives in $\A$}
Let us start with
\begin{lemma}\label{epimono} Let $$\phi =(\phi_A, \phi_B, \phi_C): [A \to B \overset{f}{\to} C] \to
[A' \to B' \overset{f'}{\to} C']$$ be a morphism in $\A$.
\textbf{i)} Assume that $[A' \to B' \overset{f'}{\to} C'] \neq 0$
and that $\End_{\Cab}(C')$ is a local ring. Then $\phi$ is
surjective iff $\phi_C$ is a split surjection. \textbf{ii)} $\phi$
is injective iff $\bar{\phi}_C: C/\Jm f \to C' /\Jm f'$ is
injective and the canonical injection $A \to B \oplus \Ker f'$, $a
\to (a,-\phi_Ba)$ splits.
\end{lemma}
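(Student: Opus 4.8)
The plan is to reduce both statements to the explicit formulas \ref{Ker2} and \ref{CoKer2} for $\Ker(\phi)$ and $\Coker(\phi)$ in $\A$, together with the observation that an object $[X \to Y \to Z]$ of $\A$ vanishes exactly when $Y \to Z$ is a split surjection. Since $\A$ is abelian, $\phi$ is surjective iff $\Coker(\phi) = 0$, i.e.\ iff $\delta = \phi_C + f' \colon C \oplus B' \to C'$ is a split surjection; and $\phi$ is injective iff $\Ker(\phi) = 0$, i.e.\ iff the map $p \colon B \oplus \Ker f' \to C \times_{C'} B'$, $p(b,b') = (-f(b),\, b' + \phi_B b)$, is a split surjection. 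Both parts then come down to unpacking one splitting condition.

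For \textbf{i)} one direction is immediate: a section $s$ of $\phi_C$ gives the section $c' \mapsto (s c', 0)$ of $\delta$, so if $\phi_C$ is a split surjection then $\phi$ is surjective, with no hypotheses used. For the converse I would pick a section $\sigma = (\sigma_1,\sigma_2)$ of $\delta$, so that $\phi_C \sigma_1 + f' \sigma_2 = \operatorname{id}_{C'}$ in $\End_\Cab(C')$. This is where locality enters: the non-units of a local ring form a two-sided ideal, so a sum equal to a unit must have a summand that is a unit. Hence either $\phi_C \sigma_1$ or $f'\sigma_2$ is an automorphism of $C'$. In the first case $\phi_C$ acquires a section and we are done; in the second case $f'$ acquires a section, so $f'$ is a split surjection and $[A' \to B' \to C'] \cong 0$, contradicting the hypothesis. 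Singling out this dichotomy is the one genuinely non-formal step of the lemma.

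For \textbf{ii)} I would split ``$p$ is a split surjection'' into ``$p$ is surjective'' and ``the inclusion $\Ker p \hookrightarrow B \oplus \Ker f'$ splits''. A direct element chase shows that the image of the projection $C \times_{C'} B' \to C$ is $\phi_C^{-1}(\Jm f')$ and that $p$ is surjective iff $\phi_C^{-1}(\Jm f') = \Jm f$ (the inclusion $\Jm f \subseteq \phi_C^{-1}(\Jm f')$ being automatic from $\phi_C f = f' \phi_B$); since $\Ker \bar{\phi}_C = \phi_C^{-1}(\Jm f')/\Jm f$, this is exactly the injectivity of $\bar{\phi}_C \colon C/\Jm f \to C'/\Jm f'$. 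Next, using $\Ker f = \Jm(A \to B)$ and the chain-map identity $f' \phi_B = \phi_C f$, I would identify $\Ker p$ with the image of the canonical map $j \colon A \to B \oplus \Ker f'$, $a \mapsto (a, -\phi_B a)$, and check that $j$ is an isomorphism onto $\Ker p$; consequently the kernel inclusion splits iff $j$ splits.

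Putting these together, $\phi$ is injective iff $p$ is surjective and its kernel inclusion splits, iff $\bar{\phi}_C$ is injective and $j$ splits, which is the assertion. I do not expect a serious obstacle beyond routine bookkeeping: verifying that $j$ really lands in $B \oplus \Ker f'$ and is monic, tracking the signs in \ref{Ker2}--\ref{CoKer2}, and using the standard fact that a short exact sequence in an abelian category splits iff its monomorphism admits a retraction. The only conceptual point is the local-ring argument in part \textbf{i)}.
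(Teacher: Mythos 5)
Your proof is correct and takes essentially the same approach as the paper's: both arguments reduce to the formulas \ref{Ker2} and \ref{CoKer2} together with the criterion that an object of $\A$ vanishes iff its last map is a split surjection, use the local-ring dichotomy applied to $\phi_C\sigma_1 + f'\sigma_2 = \operatorname{id}_{C'}$ for \textbf{i)}, and for \textbf{ii)} decompose the split surjectivity of $p$ into surjectivity (shown to be equivalent to injectivity of $\bar{\phi}_C$ by the same element chase) plus splitting of the kernel inclusion, with $\Ker p$ identified with the image of the canonical map $a \mapsto (a,-\phi_B a)$. The only cosmetic differences are that you spell out the easy converse direction in \textbf{i)} and the sign bookkeeping that the paper treats loosely.
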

\begin{proof} \textbf{i)} By \ref{CoKer2} we have that $\phi$ is
surjective iff $\phi_C + f': C\oplus B' \to C'$ is a split
surjection. Since $\End_{\Cab}(C')$ is a local ring this implies
that either $f'$ or $\phi_C$ is a split surjection. By the
assumption $f'$ is not a split surjection. Hence $\phi_C$ is a
split surjection.

\smallskip

\noindent Let us prove \textbf{ii)}. By \ref{Ker2} we have that
$\phi$ is injective iff
\begin{equation}\label{eneq}
B \oplus \Ker f' \overset{\epsilon}{\longrightarrow} C \times_{C'}
B'
\end{equation}
is a split surjection, where $\epsilon(b,v) = (-fb, \phi_B b +v)$,
for $(b,v) \in B \oplus \Ker f'$.

\noindent \textbf{Claim:} $\epsilon$ is surjective iff
\begin{equation}\label{claimconcl}
\bar{\phi}_C: C/\Jm f \to C'/ \Jm f' \hbox{ is injective.}
\end{equation}

\emph{Proof Claim.} Note that \ref{claimconcl} is equivalent to
\begin{equation}\label{concl}
\phi_C c \in \Jm f' \implies c \in \Jm f
\end{equation}

Denote by $K$ the righthand side of \ref{eneq} and fix $(c,b') \in
K$. Thus $\phi_C c = -f'b'$ and so if we assume that $\epsilon$ is
surjective we see that \ref{concl} holds. Conversely, assuming
\ref{concl} we show that $\epsilon$ is surjective. We have $(c,b')
= (fb,b')$ for some $b \in B$. Then $f'b' = -\phi_Cc = -\phi_C fb
= -f' \phi_B b$. Let $v = b'+\phi_B b \in \Ker f'$. Then we see
that $(fb,b') = \epsilon(b,v)$. This proves the claim.

Now, for $\epsilon$ surjective, we have that $\epsilon$ splits iff
the inclusion
\begin{equation}\label{Kerepsilon}
\Ker \epsilon = \{(b,-\phi_Bb); b \in Ker f\} \hookrightarrow B
\oplus \Ker f'
\end{equation}
splits which proves \textbf{ii)}.
\end{proof}

We can now prove
\begin{proposition}\label{structure of injectives} An object $[D \to I \to J]$ in $\A$ is injective if
$I, J \in \Inj(\Cab)$.
\end{proposition}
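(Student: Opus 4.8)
The plan is to prove the stronger statement that $\Ext^1_\A(X, E) = 0$ for every $X \in \A$, where $E = [D \to I \to J]$; since an object of an abelian category is injective precisely when $\Ext^1(-, E)$ vanishes, this yields the proposition. The reason for working with $\Ext^1$ rather than extending maps by hand is that, by Corollary \ref{K equivalent to D}, $\DA$ is canonically equivalent to $\KC$ as a triangulated category, this equivalence is the identity on the heart $\A$ and commutes with shifts. Hence for $X, E \in \A$ one has $\Ext^i_\A(X, E) \cong \Hom_{\KC}(X, E[i])$, that is, Ext-groups in $\A$ are computed as homotopy classes of chain maps in $\KC$, where the computation becomes completely explicit.

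First I would write $X = [A \overset{a}{\to} B \overset{b}{\to} C]$, recalling that $a$ is a monomorphism and $\Ker b = \Jm a$ because $X$ lies in $\A$; similarly write $E = [D \overset{\iota}{\to} I \overset{g}{\to} J]$. Since $E[1]$ is concentrated in degrees $-3, -2, -1$ while $X$ sits in degrees $-2, -1, 0$, a chain map $X \to E[1]$ has only two nonzero components, namely a pair $(u, v)$ with $u \colon A \to I$ and $v \colon B \to J$, and the chain-map (cocycle) condition reads $g u + v a = 0$ (up to sign). A null-homotopy is recorded by maps $s_2 \colon A \to D$, $s_1 \colon B \to I$, $s_0 \colon C \to J$, and modifies $(u, v)$ by $(s_1 a - \iota s_2,\; s_0 b - g s_1)$. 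I would record these two elementary identifications with care, since the sign conventions for the shift and for the mapping-cone differential \ref{Mf} are the only place where errors can creep in.

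The heart of the argument is then to show that every such cocycle $(u, v)$ is a coboundary, and this is exactly where the injectivity of $I$ and $J$ enters. Because $a$ is monic and $I$ is injective, the map $u \colon A \to I$ extends along $a$ to some $s_1 \colon B \to I$ with $s_1 a = u$; taking $s_2 = 0$ then matches the first coordinate. For the second coordinate I must solve $s_0 b = v + g s_1$. Set $\phi = v + g s_1 \colon B \to J$; composing with $a$ gives $\phi a = v a + g s_1 a = v a + g u = 0$ by the cocycle relation, so $\phi$ factors through $\Coker a$. Since $\Ker b = \Jm a$, the differential $b$ factors as $B \twoheadrightarrow \Coker a \overset{\bar b}{\hookrightarrow} C$ with $\bar b$ monic; as $J$ is injective, the induced map $\Coker a \to J$ extends along $\bar b$ to some $s_0 \colon C \to J$, whence $s_0 b = \phi = v + g s_1$. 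Thus $(u, v)$ is a coboundary and $\Ext^1_\A(X, E) = 0$.

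I expect the main obstacle to be organisational rather than conceptual: correctly setting up the identification $\Ext^i_\A(X, E) \cong \Hom_{\KC}(X, E[i])$ and the resulting explicit description of cocycles and coboundaries, signs included, from the differential \ref{Mf}. Once that bookkeeping is in place, the two invocations of injectivity, extending $u$ along the monic $a$ and extending the induced map along the monic $\bar b$, together with the defining relation $\Ker b = \Jm a$ for objects of $\A$, close the argument at once. As a remark I would note that the identical computation also gives $\Ext^2_\A(X, E) = 0$, consistent with $\A$ having homological dimension at most $2$, although only the vanishing of $\Ext^1$ is needed here.
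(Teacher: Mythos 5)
Your proof is correct, but it takes a genuinely different route from the paper's. The paper argues in two steps, entirely inside $\A$: first it shows that $P_I$ is injective for $I \in \Inj(\Cab)$ by a lifting argument that leans on Lemma \ref{epimono} (a surjection onto $P_I$ forces $\bar{\phi}_C$ to be monic, so the map to $P_I$ factors through $P_{C/\Jm f}$ and extends using injectivity of $I$ in $\Cab$); then it dimension-shifts along the standard resolution $0 \to P_D \to P_I \to P_J \to [D \to I \to J] \to 0$, using that the homological dimension of $\A$ is at most $2$, to conclude that $[D \to I \to J]$ is injective. You instead pass through the triangulated picture: identifying $\Ext^1_\A(X,E)$ with $\Hom_{\KC}(X,E[1])$ via Corollary \ref{K equivalent to D} (note the equivalence is indeed the inclusion on the heart, because it sends the standard resolution \ref{standard resolution} of $[A \to B \to C]$ back to that complex in $\KC$), and then killing every cocycle $(u,v)$ by two explicit extensions along monomorphisms, one for $I$ and one for $J$. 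Both arguments are sound and there is no circularity, since Corollary \ref{K equivalent to D} precedes this proposition and does not use it. What your version buys: it treats all objects $[D \to I \to J]$ in one uniform computation, avoids Lemma \ref{epimono} altogether, makes completely transparent where each injectivity hypothesis on $I$ and $J$ enters, and yields $\Ext^2_\A(X,E)=0$ for free; it also illustrates the paper's guiding philosophy that $\A$ lives naturally inside $\KC$. What the paper's version buys: part (a) isolates the statement that $P_I$ is injective, which is reused elsewhere (e.g.\ in Corollary \ref{enough injectives} and Lemma \ref{projs and injs in A0}), and the dimension-shifting step recycles general structure rather than redoing a chain-level computation. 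One small remark: for $i=1$ you do not even need the full strength of Corollary \ref{K equivalent to D}, since for the heart of any $t$-structure the Yoneda $\Ext^1$ agrees with $\Hom$ into the shift in the ambient triangulated category; the corollary is only needed if you want the same identification for higher $\Ext$ groups, as in your closing remark.
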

\begin{proof} \textbf{a)} Assume that $I \in \Inj(\Cab)$.
We first show that $P_I$ is injective in $\A$. Consider a
commutative diagram
\begin{equation}\label{diagraminjective}
   \begin{diagram}
    \node[2]{P_I} \\
  \node{0} \arrow{e}  \node{[A \to B
\overset{f}{\to} C]} \arrow{n,r}{\gamma} \arrow{e,b}{\phi}
\node{[A' \to  B' \overset{f'}{\to} C']}
\arrow{nw,t,..}{\tilde{\gamma}}
   \end{diagram}
\end{equation} We must construct a lift $\tilde{\gamma}$. By
Lemma \ref{epimono} the map $\overline{\phi}_C: C/ \Jm f \to C' /
\Jm f'$ is injective and moreover we see that $\gamma$ factors
through $P_{C/\Jm f}$. Hence we can fill the dotted arrow and get
a commutative diagram
   $$
   \begin{diagram}
    \node[2]{P_I} \\
    \node[2]{P_{C/\Jm f}}\arrow{n,r}{(0,\overline{\gamma}_C)}\arrow{eee,b}{\overline{\phi}_C}    \node{P_{C'/\Jm f'}}
    \arrow{nw,t,..}{(0,\widetilde{\bar{\gamma}}_C)}  \\
    \node{[A \to B\to C]} \arrow[3]{eee,b}{\phi} \arrow{nne,l}{\gamma} \arrow{ne,l}{nat}
    \arrow{e}\node[3]{[A' \to B' \to
    C']} \arrow{nw,l}{nat'}
   \end{diagram}
   $$
where $nat$ and $nat'$ are the natural maps. Hence we can take
$\tilde{\gamma} = (0, \widetilde{\bar{\gamma}}_C \circ nat')$.

\smallskip

\noindent \textbf{b)} We have the exact sequence
\begin{equation}\label{}
0 \to P_{D} \to P_I \to P_J \to [D \to I \to J] \to 0
\end{equation}
By \textbf{a)} $P_I$ and $P_J$ are injective. Since the
homological dimension of $\A$ is $\leq 2$ it follows that $[D \to
I \to J]$ is injective as well.
\end{proof}

\begin{cor}\label{enough injectives}  If $\,\Cab$ has enough injectives
then also $\A$ has enough injectives.
\end{cor}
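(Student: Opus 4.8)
The plan is to show that every object $X = [A \overset{u}{\to} B \overset{v}{\to} C]$ of $\A$ admits a monomorphism into an object of the form $[D \to I \to J]$ with $I, J \in \Inj(\Cab)$, which is injective by Proposition \ref{structure of injectives}. Recall that $u$ is a monomorphism with $\Jm u = \Ker v$, so up to isomorphism the datum of $X$ is just the map $v$.

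First I would produce the target injective by a pushout. Since $\Cab$ has enough injectives, choose a monomorphism $\beta \colon B \hookrightarrow I$ with $I \in \Inj(\Cab)$, and form the pushout $Q$ of $C \overset{v}{\leftarrow} B \overset{\beta}{\to} I$, with structure maps $q_C \colon C \to Q$ and $q_I \colon I \to Q$ satisfying $q_C v = q_I \beta$. Because $\beta$ is a monomorphism, $q_C$ is a monomorphism and, crucially, $\Ker q_I = \beta(\Ker v)$. Next embed $Q$ into an injective, $j \colon Q \hookrightarrow J$ with $J \in \Inj(\Cab)$, and set $w := j q_I \colon I \to J$ and $\phi_C := j q_C \colon C \to J$. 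Then $w\beta = \phi_C v$ and $\Ker w = \Ker q_I = \beta(\Ker v)$, so $E := [\Ker w \to I \overset{w}{\to} J]$ is an object of $\A$ of the desired form, hence injective. Since $\beta u$ has image $\beta(\Jm u) = \beta(\Ker v) = \Ker w$, it corestricts to a map $\phi_A \colon A \to \Ker w$, and $\phi := (\phi_A, \beta, \phi_C) \colon X \to E$ is a morphism in $\A$.

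It remains to check that $\phi$ is a monomorphism, for which I would invoke the two criteria of Lemma \ref{epimono}(ii); this is where the pushout does the real work. For condition (a), that $\bar{\phi}_C \colon C/\Jm v \to J/\Jm w$ is injective, I would use that $j$ is a monomorphism to reduce to $\phi_C c \in \Jm w \implies c \in \Jm v$, which is immediate from the explicit description of the pushout: if $[(c,0)] = [(0,\iota)]$ in $Q$ then $c = vb$ for some $b$. For condition (b), that the antidiagonal $\{(b, -\beta b) \colon b \in \Ker v\} \hookrightarrow B \oplus \Ker w$ splits, I would use that $\Ker w = \beta(\Ker v)$ \emph{exactly}, so that $\beta$ restricts to an isomorphism $\Ker v \overset{\sim}{\to} \Ker w$; the map $(x,k) \mapsto -(\beta|_{\Ker v})^{-1}(k)$ is then the required retraction.

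The main obstacle is precisely obtaining both conditions of Lemma \ref{epimono}(ii) at once. The naive attempt, embedding $B \hookrightarrow I$ and $C \hookrightarrow J$ and then extending $v$ to some $w \colon I \to J$ by injectivity of $J$, fails: the extension $w$ is uncontrolled, so its kernel may strictly contain $\beta(\Ker v)$ (breaking (b)) and its image may be too large (breaking (a)). Replacing the arbitrary extension by the pushout is exactly what forces $\Ker w = \beta(\Ker v)$ together with the pullback property needed for (a). Once $\phi$ is known to be a monomorphism into the injective object $E$, the corollary follows.
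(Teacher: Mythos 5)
Your proof is correct and takes essentially the same route as the paper: the paper's own argument also embeds $[A \to B \to C]$ into an object $[A' \to I \to J]$ with $I,J \in \Inj(\Cab)$ (injective in $\A$ by Proposition \ref{structure of injectives}) via a morphism $\phi$ with $\phi_B, \phi_C$ injective, $\phi_B(A) = A'$ and $\overline{\phi}_C$ injective, then concludes by Lemma \ref{epimono}. The only difference is that the paper dismisses the construction of this embedding as ``standard arguments,'' whereas your pushout construction carries it out explicitly (and correctly).
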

\begin{proof}   Let $[A \to B \overset{d}{\to} C]$ in $\A$ be
given. By standard arguments we can find an object $[A' \to I
\overset{\pa}{\to} J]$ with $I,J$ injective and a morphism $\phi:
[A \to B \overset{d}{\to} C] \to [A' \to I \overset{\pa}{\to} J]$
with the properties that $\phi_B$ and $\phi_C$ are injective,
$\phi_B(A) = A'$ and $\overline{\phi}_C: C / \Jm d \to J / \Jm
\pa$ is injective. Thus by Lemma \ref{epimono} $\phi$ is
injective.
\end{proof}
 The converse of Proposition \ref{structure of injectives} also
 holds
\begin{cor}\label{structure of injectives cor}  Assume that $\Cab$ has enough injectives. Each injective object of $\A$ is isomorphic to an
object of the form $[A \to I \to J]$, where $I,J \in \Inj(\Cab)$.
\end{cor}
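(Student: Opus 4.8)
The plan is to deduce the claim from Corollary \ref{enough injectives} and Proposition \ref{structure of injectives} by a standard splitting argument, and then to do some structural work on direct summands. Let $X$ be an arbitrary injective object of $\A$. First I would invoke the construction used in the proof of Corollary \ref{enough injectives}: since $\Cab$ has enough injectives, that construction produces a monomorphism $\iota\colon X\to Y$ in $\A$ whose target has the form $Y=[A'\to I\to J]$ with $I,J\in\Inj(\Cab)$. By Proposition \ref{structure of injectives} this target is itself injective in $\A$.

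Next, because $X$ is injective, I would lift $\mathrm{id}_X$ along the monomorphism $\iota$, obtaining a retraction $\rho\colon Y\to X$ with $\rho\iota=\mathrm{id}_X$. Hence $\iota$ splits and $X$ is a direct summand of $Y$; setting $e=\iota\rho\in\End_\A(Y)$ gives an idempotent with $X\cong\Jm(e)$, and since $\A$ is abelian this image is a genuine object. The whole statement is thereby reduced to the purely structural assertion that \emph{a direct summand of $[A'\to I\to J]$ with $I,J\in\Inj(\Cab)$ is again isomorphic to an object of the same form}; equivalently, that the class of such objects is closed under direct summands in $\A$.

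To settle this last point I would analyse the idempotent $e=(e_{A'},e_I,e_J)$ at the chain level. Writing $\alpha,\beta$ for the two differentials of $Y$ and choosing a homotopy $s=(s^0,s^{-1})$ witnessing $e^2\sim e$, the relation $e^2-e=d_Ys+sd_Y$ unwinds degreewise into $e_J^2-e_J=\beta s^0$, $e_I^2-e_I=\alpha s^{-1}+s^0\beta$ and $e_{A'}^2-e_{A'}=s^{-1}\alpha$. The goal is to replace $e$, within its homotopy class, by an \emph{honest} idempotent chain map: such an idempotent splits $Y$ termwise, and then the summand representing $X$ has middle and right terms $\Jm(e_I)$ and $\Jm(e_J)$, which are direct summands of the injectives $I$ and $J$ and hence again injective. (Alternatively, one may compute $\Jm(e)$, or $\Ker(\mathrm{id}-e)$, directly via the formulas \ref{Ker2} and \ref{Im2}, and then split off the contractible summand — an object that is zero in $\A$ — so as to be left with a representative whose middle and right terms are the injective pieces.)

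I expect the main obstacle to be precisely this strictification: passing from a map that is idempotent only up to homotopy to an actual termwise decomposition of the complex, i.e. establishing closure of the class $[A\to I\to J]$ under summands. When $\Cab=\Rmod$ for an artin algebra — the case of interest — this is automatic, since $\KR$ is then a Krull–Schmidt category and every object has a minimal representative with no contractible summands, forcing the split summand to be carried by summands of $I$ and $J$. Everything preceding this step (the injective embedding $\iota$ and its splitting) is formal once Corollary \ref{enough injectives} and Proposition \ref{structure of injectives} are available.
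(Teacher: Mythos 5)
Your proposal coincides with the paper's proof in its first two steps, and those two steps are all the paper gives: the paper re-uses the monomorphism $\phi\colon E\to [A'\to I'\to J']$ from the proof of Corollary \ref{enough injectives}, splits it using the injectivity of $E$, and then concludes with the single sentence ``This implies that $E$ has the desired form.'' The issue you isolate --- that one still has to know the class of objects $[A\to I\to J]$ with $I,J\in\Inj(\Cab)$ is closed under direct summands in $\A$ --- is therefore exactly the step the paper leaves implicit, and your caution is warranted: a summand in $\A$ is a priori only the image of an idempotent in $\End_\A(Y)$, which is a quotient of the chain-level endomorphism ring by the ideal of null-homotopic maps; that ideal need not be nilpotent, so idempotents do not lift to the chain level automatically, and the canonical representatives produced by \ref{Ker2} and \ref{Im2} have middle and end terms (pullbacks and sums involving $B$ and $C$) that are not visibly injective. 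Your closing argument in the Krull--Schmidt case is correct and completable: for $\Cab=\Rmod$ with $R$ an artin algebra, $\Cb(\Rmod)$ is Krull--Schmidt, so $Y=[A'\to I\to J]$ decomposes termwise into indecomposable complexes, each of which is either contractible or has local endomorphism ring in $\KR$; comparing with $Y\cong E\oplus E'$ by Krull--Schmidt uniqueness in $\KR$ exhibits $E$ as a direct sum of termwise summands of $Y$, whose degree $-1$ and $0$ components are summands of $I$ and $J$ and hence injective (and such a summand lies in $\A$ since the cohomology vanishing conditions pass to chain-level summands). Since this is the case the paper actually uses later (for $SP_X$ in Theorem \ref{theorem main} and for the bijection $\Ind(\Cab)\cong\Ind(\Inj(\A))$), your proof is adequate where it matters; for a completely general abelian $\Cab$ with enough injectives the strictification step you postpone does remain open in your write-up, but this is equally a gap in the paper's own one-line conclusion, not something the paper supplies and you missed.
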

\begin{proof} Let $E \in \A$ be injective. In the proof of
corollary \ref{enough injectives} we saw that we can find an
object $[A' \to I' \to J']$, with $I'$ and $J'$ injective in
$\Cab$ and an injective morphism $\phi: E \to [A' \to I' \to J']$.
Since $E$ is injective $\phi$ splits. This implies that $E$ has
the desired form.
\end{proof}
Note that corollary \ref{structure of injectives cor} gives a
natural bijection $\Ind(\Cab) \cong \Ind(\Inj(\A))$: a
non-injective $A \in \Cab$ corresponds to $[A \to I \to J]$, where
$A \hookrightarrow I \to J$ is an indecomposable injective
resolution of $A$ and an injective $A \in \Cab$ corresponds to $[0
\to 0 \to A]$.

\subsection{Injectives and projectives in $\A^0$}
Recall that $\A^0$ denotes the full subcategory of $\A$ whose
objects are short exact sequences. The inclusion $\A^0 \to \A$ has
the right adjoint
\begin{equation}\label{functor q}
q: \A \to \A^0, \ q([A \to B \overset{d}{\to} C]) = [A \to B \to
\Jm d]
\end{equation}
Assume that $\Cab$ has enough projectives and injectives and put
for $X \in \Cab$, $P^0_X = [\Ker d \to Q \overset{d}{\to} X]$
where $d:Q \to X$ is surjective and $Q$ is projective. Also, put
$I^0_X = q(I_X)$. Then we have
\begin{lemma}\label{projs and injs in A0}  For each $X \in \Cab$, $P^0_X$ is projective in
$\A^0$ and $\Hom_{\A^0}(P^0_X, V) = \Hom_\A(P_X,Y)$ for $V \in
\A^0$. Any indecomposable projective in $\A^0$ is (isomorphic to
an object) of the form $P^0_X$ where $X$ is indecomposable and
non-projective in $\Cab$. Similarly, each $I^0_X$ is injective in
$\A^0$, $\Hom_{\A^0}(V, I^0_X) = \Hom_\A(V,I_X)$ and each
indecomposable injective in $\A^0$ is of the form $I^0_X$ where
$X$ is indecomposable and non-injective in $\Cab$.
\end{lemma}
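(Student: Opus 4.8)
The plan is to deduce everything from two adjunction facts and one elementary $\Hom$-computation, proving the projective assertions in detail and obtaining the injective ones by the dual argument. Recall that $i\colon\A^0\to\A$ is fully faithful and exact with right adjoint $q$; being a left adjoint, $i$ also preserves cokernels, so both $\Hom$-groups and (co)kernels may be computed in $\A$. The one computation I need is that for $Y\in\Cab$ and $[A\to B\overset g\to C]\in\A$,
\[
\Hom_\A\big(P_Y,[A\to B\overset g\to C]\big)\cong\Coker\big(g_*\colon\Hom_\Cab(Y,B)\to\Hom_\Cab(Y,C)\big),
\]
which follows by writing a chain map $P_Y\to[A\to B\to C]$ as its degree-zero component $\phi_C\in\Hom_\Cab(Y,C)$ and observing that it is null-homotopic exactly when $\phi_C$ factors through $g$.

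To obtain the $\Hom$-formula I would apply the contravariant $\Hom_\A(-,V)$ to the right end $P_Q\overset{P_d}\to P_X\to P^0_X\to0$ of the standard resolution \ref{standard resolution} of $P^0_X=[\Ker d\to Q\overset d\to X]$, yielding the exact sequence
\[
0\to\Hom_\A(P^0_X,V)\to\Hom_\A(P_X,V)\to\Hom_\A(P_Q,V).
\]
For $V=[A\to B\overset g\to C]\in\A^0$ the map $g$ is epi and $Q$ is projective, so the computation above gives $\Hom_\A(P_Q,V)=\Coker(g_*)=0$; hence $\Hom_{\A^0}(P^0_X,V)\cong\Hom_\A(P_X,V)$ naturally in $V$ (using that $\A^0$ is full). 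Projectivity of $P^0_X$ then drops out: the right-hand side is $\Hom_\A(P_X,-)\circ i$, a composite of exact functors, since $P_X$ is projective in $\A$ by Corollary \ref{K equivalent to D} and $i$ is exact.

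The injective half is quicker: by the adjunction $i\dashv q$ one has $\Hom_{\A^0}(V,I^0_X)=\Hom_{\A^0}(V,qI_X)\cong\Hom_\A(iV,I_X)=\Hom_\A(V,I_X)$ naturally in $V$, and exactness of $\Hom_\A(-,I_X)\circ i$ (with $I_X$ injective in $\A$ by Proposition \ref{structure of injectives}) gives injectivity of $I^0_X$. For the classification of indecomposables — where I assume $\Cab$ is Krull--Schmidt with projective covers, as in the intended applications — I would first note $P^0_X\cong\bigoplus_iP^0_{X_i}$ for a decomposition $X=\bigoplus_iX_i$, and $P^0_X\cong0$ when $X$ is projective (take $Q=X$, $d=\mathrm{id}$). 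Evaluating the $\Hom$-formula at $V=P^0_X$ identifies $\End_{\A^0}(P^0_X)$ with the stable endomorphism ring $\End_\Cab(X)/\mathfrak p$, where $\mathfrak p=\Jm(d_*)$ is the ideal of endomorphisms of $X$ factoring through the projective $Q$ (equivalently, through some projective); for $X$ indecomposable non-projective $\mathrm{id}_X\notin\mathfrak p$, so this is a nonzero quotient of the local ring $\End_\Cab(X)$ and is itself local, whence $P^0_X$ is indecomposable. Conversely, lifting a surjection $Q\to C$ through $B\to C$ produces an epimorphism $P^0_C\to V$ for any $V=[A\to B\to C]\in\A^0$ (its cokernel, computed in $\A$ via \ref{CoKer2}, is a split surjection and hence vanishes), so $\A^0$ has enough projectives; any indecomposable projective is thus a summand of some $P^0_C=\bigoplus_iP^0_{C_i}$ and, by Krull--Schmidt, equals one $P^0_{C_i}$ with $C_i$ non-projective. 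The injective classification follows dually.

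I expect the main obstacle to be the bookkeeping in the indecomposable part rather than any deep point: one must check that the identification $\End_{\A^0}(P^0_X)\cong\End_\Cab(X)/\mathfrak p$ is a ring (not merely additive) isomorphism, and that $X\mapsto P^0_X$ is well defined up to isomorphism independently of the chosen projective presentation, the ambiguity being precisely by trivial summands $[P\overset{\mathrm{id}}\to P\to0]\cong0$. Granting these, the whole statement reduces to the two adjunctions and the single $\Hom$-computation.
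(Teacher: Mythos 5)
Your proof is correct, and since the paper's own ``proof'' consists of the single remark that all verifications are left to the reader, together with the hint to mimic Corollary \ref{structure of injectives cor} for the classification part, what you have written is precisely the content the paper omits. Your route is the natural one and it checks out: the computation $\Hom_\A(P_Y,[A \to B \overset{g}{\to} C]) \cong \Coker(g_*)$ is exactly the Yoneda-type description of morphisms out of $P_Y$; applying $\Hom_\A(-,V)$ to the right end of the standard resolution \ref{standard resolution} of $P^0_X$ and killing $\Hom_\A(P_Q,V)$ (possible because $Q$ is projective and $V$ is a short exact sequence) gives the $\Hom$-formula and hence projectivity, since $\Hom_\A(P_X,-)\circ i$ is a composite of exact functors by Corollary \ref{K equivalent to D}; and the adjunction $i \dashv q$ from \ref{functor q} gives the injective half essentially for free. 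Your classification argument --- a split epimorphism $P^0_C \to V$ whose cokernel vanishes by \ref{CoKer2}, followed by Krull--Schmidt/Azumaya applied to $P^0_C \cong \oplus_i P^0_{C_i}$, where $\End_{\A^0}(P^0_X) \cong \End_\Cab(X)/\mathfrak{p}$ is local for $X$ indecomposable non-projective --- is exactly the suggested mimicry of Corollary \ref{structure of injectives cor}, completed by the indecomposability analysis that the paper glosses over. One point worth making explicit: the Krull--Schmidt hypothesis you add is not among the standing assumptions of the subsection (which asks only that $\Cab$ have enough projectives and injectives), but some such hypothesis is genuinely needed for the statement about indecomposables --- without it one only gets that every projective of $\A^0$ is a direct summand of some $P^0_C$ --- and it does hold in all of the paper's applications ($\Cab$ a finite length category, or $\Rmod$ for $R$ an artin algebra). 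Your two flagged loose ends (well-definedness of $P^0_X$ up to the trivial summands $[P \overset{\Id}{\to} P \to 0] \cong 0$, and the fact that the identification of $\End_{\A^0}(P^0_X)$ with the stable endomorphism ring is multiplicative, both isomorphisms being induced by precomposition with $P_X \to P^0_X$) are routine and verify without difficulty.
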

\begin{proof} All verifications are left to the reader. For the
part which states that all projectives and injectives in $\A^0$
are isomorphic to objects of the prescribed form just mimic the
argument of corollary \ref{structure of injectives cor}.
\end{proof}

\section{AR-sequences for representations of an artin algebra}

\subsection{AR-sequences are simple objects of
$\A$}\label{simple object section} In this section we assume that
$\Cab$ is a finite length (abelian) category. We start by briefly
recalling AR-theory in $\Cab$, (see \cite{ARS} for details about
the material here and compare with \cite{H} and \cite{K} for the
theory of AR-triangles in triangulated categories that is not
treated here). Fix a morphism
$$
B \overset{f}{\to} C
$$
in $\Cab$. $f$ is called an almost split right map if $f$ is
\emph{not} a split surjection and any map $\phi: X \to C$ which is
not a split surjection factors through $f$. Assume from now on
that $f$ is right almost split. It follows that $C$ is necessarily
indecomposable.

Almost split right maps have the following properties:

\begin{itemize}
    \item If $C$ is projective it has a unique maximal submodule
    $rad\,
C$ and $f$ is the inclusion $rad\, C \hookrightarrow C$.
    \item If $C$ is not projective then $f$ is necessarily
    surjective.
\end{itemize}
Dually, there is the notion of an almost split left map $g:A \to
B$. $g$ is not a split injection and any $h: A \to Y$ which is not
a split injection factors through $g$.

A short exact sequence
$$
0 \to A \overset{g}{\to} B \overset{f}{\to} C \to 0
$$
is called an almost split exact sequence, or an AR-sequence, if
$g$ is left almost split and $f$ is right almost split. See
\cite{A}, \cite{J} and \cite{Sm} for some positive and negative
existence results for AR-sequences.

Let $h: X \to Y$ in $\Cab$ be a given map in $\Cab$. Recall that a
(right) minimal version of $h$ is a map $h_{min}: X' \to Y$ such
that $h_{min}$ factors through $h$ and $h$ factors through
$h_{min}$ and $X'$ has minimal length with this property.
$h_{min}$ exists and is unique up to isomorphism of maps over $Y$.
The minimal length of $X'$ is equivalent to require that $X'$ has
no non-zero direct summand mapped to $0$ by $h_{min}$.

If one assumes that $h$ is right almost split it follows that
$\Ker h_{min}$ is indecomposable.

It is easy to see that if $B \overset{f}{\to} C$ and $B'
\overset{f'}{\to} C$ are almost split right maps, then $[\Ker f
\to B \overset{f}{\to} C] \cong [\Ker f' \to B' \overset{f'}{\to}
C]$ in $\A$. We next show that the almost split right maps are
precisely the simple objects of $\A$.

\begin{proposition}\label{classification} Let
$X = [A \to B \overset{d}{\to} C]$ be an object of $\A$ and assume
that $C$ be indecomposable in $\Cab$. Then $X$ is simple iff $d$
is an almost split right map. If $d$ is almost split we write $L_C
:= X$. In this case $L_C$ is the unique simple quotient of $P_C$.
\end{proposition}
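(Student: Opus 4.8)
The plan is to prove the equivalence in two directions and then read off the uniqueness statement from the functor interpretation. Throughout write $X = [A \to B \overset{d}{\to} C]$ with $C$ indecomposable, so that $\End_\Cab(C)$ is a local ring. Note first that if $X$ is simple then $X \neq 0$, and since $[A \to B \overset{d}{\to} C] \cong 0$ iff $d$ is a split surjection, this already gives that $d$ is not a split surjection, which is one half of ``$d$ is almost split right''.

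For the direction ``$X$ simple $\Rightarrow d$ almost split right'' I would argue via maps out of the projectives $P_Z$. Given $\psi\colon Z \to C$ in $\Cab$ that is not a split surjection, the triple $(0,0,\psi)$ is a chain map $P_Z \to X$, and two such triples are homotopic exactly when their $C$-components differ by a map of the form $d\circ h$; thus $(0,0,\psi)$ is the zero morphism of $\A$ iff $\psi$ factors through $d$. Now I apply Lemma \ref{epimono}\,\textbf{i)} to $(0,0,\psi)\colon P_Z \to X$: since $X\neq 0$ and $\End_\Cab(C)$ is local, this morphism is surjective iff its $C$-component $\psi$ is a split surjection. As $\psi$ is not a split surjection the morphism is not surjective, so its image is a proper subobject of the simple object $X$ and must be $0$. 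Hence $(0,0,\psi)=0$ in $\A$, i.e. $\psi$ factors through $d$. Since $\psi$ was an arbitrary non-split surjection onto $C$, the map $d$ is almost split right.

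For the converse and for uniqueness I would pass through the embedding $\pi\colon \A \to \Ab^{\Cab^{op}}$, under which $P_A \mapsto h_A$ and $X = [A \to B \overset{d}{\to} C] \mapsto \Coker(h_d) = h_C/\Jm(h_d)$. The key bridge is that, because $C$ is indecomposable, the radical subfunctor $\operatorname{rad}(h_C)$ evaluates at $Z$ to the morphisms $Z \to C$ that are not split surjections, and $d$ is almost split right precisely when these agree with the morphisms factoring through $d$, i.e. precisely when $\Jm(h_d) = \operatorname{rad}(h_C)$. Hence, for almost split $d$, $\pi(X) = h_C/\operatorname{rad}(h_C)$ is the top of the indecomposable projective $h_C$ (whose endomorphism ring is the local ring $\End_\Cab(C)$), and this is a simple functor in the sense of Auslander. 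Since $\pi$ is exact and fully faithful it reflects monomorphisms and isomorphisms, so any proper nonzero subobject of $X$ would produce a proper nonzero subobject of $\pi(X)$; as $\pi(X)$ is simple and nonzero, $X$ is simple. This completes the equivalence.

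Finally, $P_C$ corresponds to the indecomposable projective $h_C$, which has $\operatorname{rad}(h_C)$ as its unique maximal subobject and hence $h_C/\operatorname{rad}(h_C) = \pi(L_C)$ as its unique simple quotient; the standard resolution \ref{standard resolution} exhibits the epimorphism $p\colon P_C \to L_C$, so $L_C$ is a simple quotient. If $P_C \to S$ is any simple quotient in $\A$, then applying $\pi$ gives a nonzero quotient $h_C \to \pi(S)$, which by uniqueness of the maximal subobject of $h_C$ admits an epimorphism onto $h_C/\operatorname{rad}(h_C) = \pi(L_C)$; by fullness this epimorphism is $\pi$ of a nonzero map $S \to L_C$, and as both are simple it is an isomorphism, so $S \cong L_C$. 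The step carrying the real content, and the main obstacle, is the bridge identifying the almost-split condition with the equality $\Jm(h_d) = \operatorname{rad}(h_C)$ together with the fact that the top of $h_C$ is a simple functor; without this correspondence one would have to confront, intrinsically in $\A$, simple objects whose end term is only indirectly known to be indecomposable.
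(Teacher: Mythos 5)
Your proof is correct, but for the harder direction and for uniqueness it takes a genuinely different route from the paper's. For ``simple $\Rightarrow$ almost split'' you and the paper use the same mechanism --- Lemma \ref{epimono} \textbf{i)}, locality of $\End_\Cab(C)$, and the homotopy description of when $(0,0,\psi)\colon P_Z \to X$ vanishes --- except that the paper runs it contrapositively (choosing $f\colon D \to C$ not split epi and not factoring through $d$, and exhibiting a zero composite $P_D \to X \twoheadrightarrow [\Ker(d+f) \to B \oplus D \to C]$ with nonzero first map and nonzero target), while you run it directly; that is only a repackaging. The genuine divergence is in ``almost split $\Rightarrow$ simple'' and in the uniqueness claim: the paper stays inside $\A$, replaces $d$ by a right minimal version, and uses the resulting fact that every endomorphism of $B$ over $C$ is an automorphism to show that every surjection from $X$ onto a nonzero object is injective; you instead transport the problem through the exact fully faithful embedding $\pi\colon \A \to \Ab^{\Cab^{op}}$ and identify $\pi(L_C)$ with the top $h_C/\operatorname{rad}(h_C)$ of the indecomposable projective $h_C$, which is exactly Auslander's classical functor-category picture (the paper itself remarks that $\A$ is a realization of that category). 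Your route makes the final assertion of the Proposition fall out for free, since uniqueness of the simple quotient of $P_C$ is immediate from uniqueness of the maximal subfunctor of $h_C$, whereas the paper argues it separately via Lemma \ref{epimono} \textbf{i)}. The price is that the two facts carrying the real weight are asserted rather than proved: (a) for $C$ indecomposable, the non-split surjections $Z \to C$ form a subfunctor $\operatorname{rad}(h_C) \subset h_C$ --- closure under addition needs locality of $\End_\Cab(C)$ (if $f_1+f_2$ is a split epi with section $s$, then $f_1 s + f_2 s = 1_C$ forces some $f_i s$ to be a unit, whence $f_i$ is a split epi); and (b) $\operatorname{rad}(h_C)$ is the unique maximal subfunctor --- any subfunctor $G$ containing a split epi $f$ with section $s$ contains $G(s)(f) = f \circ s = 1_C$, hence equals $h_C$ by Yoneda, and then the quotient $h_C/\operatorname{rad}(h_C)$ is simple. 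Both are two-line arguments; include them and your proof is complete and self-contained. What the paper's intrinsic argument buys is independence from any radical or functor-category input; what yours buys is conceptual transparency and a cleaner derivation of uniqueness.
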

\begin{proof}
Note that by the Krull Schmidt theorem $\End_{\Cab}(C)$ is a local
ring.

Assume that $B \overset{d}{\to} C$ is almost split. We shall show
$X$ is simple. For this it is enough to show the following: Let
$\phi : X \twoheadrightarrow Y$, with $Y \neq 0$, be a surjective
map. Then $\phi$ is injective.

We may assume that $d$ is right minimal. Then any endomorphism $h$
of $B$ over $C$ is an automorphism. Since $C$ is indecomposable we
may assume that $Y = [A' \to B' \overset{d'}{\to} C]$, where $d'$
is not a split surjection, and that $\phi_C = \Id_C$. Since $d$ is
almost split it follows that $d' = d \circ g$, for some $g: B' \to
B$. Then $g \circ \phi_B$ is an endomorphism of $B$ over $C$ and
hence an isomorphism. This means that $\phi \circ (g, \Id_C)$ is
an automorphism of $L_C$. Thus $\phi$ is injective.

Conversely, assume that $d$ is not almost split. Then we can find
$f:D \to C$ which is not a split epi such that $f$ does not factor
through $d$. Consider the composition
$$
P_D \to X \twoheadrightarrow [\Ker (d+f) \to B \oplus D
\overset{d+f}{\to} C]
$$
The third object and the first map are non-zero by assumption. But
the composition is zero, so $X$ is not simple.

For the last assertion, we have already proved that $L_C$ is
simple and, clearly, $L_C$ is a quotient of $P_C$. Conversely, if
$Y$ is a simple quotient of $P_C$ it follows from Lemma
\ref{epimono} \textbf{i)} that we may assume the end-term of $Y$
is $C$. Then by what we just have shown we see that $Y$ is given
by an almost split right map with target $C$, i.e. $Y \cong L_C$.
\end{proof}
We next reprove the well-known result that right almost split maps
fit into AR-sequences:
\begin{cor}\label{leftsplit} Assume that $\Cab$ has enough injectives.
Let $0 \to A  \overset{g}{\to} B \overset{f}{\to} C \to 0$ be a
short exact sequence in $\Cab$ such that $f$ is minimal right
almost split. Then $g$ is left almost split.
\end{cor}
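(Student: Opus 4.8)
The plan is to run the classical pushout argument of \cite{ARS} entirely inside $\Cab$, with the right minimality of $f$ as the one essential input; the ambient abelian category $\A$ and the simple object $L_C = [A \to B \overset{f}{\to} C]$ of Proposition \ref{classification} will play no role in the argument itself, and I expect the enough-injectives hypothesis to be unnecessary here (it is merely in force from the surrounding discussion).

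First I would dispose of the easy half: $g$ is not a split monomorphism, for otherwise the sequence $0 \to A \overset{g}{\to} B \overset{f}{\to} C \to 0$ would split and $f$ would be a split surjection, contradicting the assumption that $f$ is right almost split. Then I fix an arbitrary map $h: A \to Y$ that is \emph{not} a split monomorphism and aim to factor it through $g$. I form the pushout of $g$ along $h$, producing a short exact sequence $0 \to Y \overset{v}{\to} E \overset{f'}{\to} C \to 0$ together with a map $u: B \to E$ satisfying $u g = v h$ and $f' u = f$. The routine pushout bookkeeping shows that $h$ factors through $g$ as soon as this pushed-out sequence splits: a retraction $r$ of $v$ gives $r u g = r v h = h$, so $h = (r u)\,g$. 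Hence it suffices to prove that the lower sequence splits.

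The core of the proof, and the step I expect to be the only subtle one, is showing that the pushout cannot fail to split. Suppose it does not split, so $f'$ is not a split surjection. Because $f$ is right almost split, $f'$ factors as $f' = f t$ for some $t: E \to B$. Then $f(t u) = f' u = f$, so $t u$ is an endomorphism of $B$ over $C$; this is exactly where minimality enters, since right minimality of $f$ forces $t u$ to be an automorphism of $B$. Setting $\sigma = (t u)^{-1} t : E \to B$ gives $\sigma u = \Id_B$, and since $f (t u)^{-1} = f$ I compute $f \sigma v = f t v = f' v = 0$, so $\sigma v$ lands in $\Ker f = \Jm g$ and therefore equals $g \psi$ for a unique $\psi: Y \to A$. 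Finally $g \psi h = \sigma v h = \sigma u g = g$, whence $\psi h = \Id_A$ by injectivity of $g$, exhibiting $h$ as a split monomorphism and contradicting the choice of $h$.

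Thus the assumption that the pushout does not split is untenable, the pushout sequence splits, and $h$ factors through $g$. Since $h$ was an arbitrary non-split monomorphism out of $A$, the map $g$ is left almost split, completing the proof. The one place where I would tread carefully when writing out the details is the passage from $f(t u) = f$ to the automorphy of $t u$, making sure the exact formulation of right minimality used matches the one fixed earlier in the section.
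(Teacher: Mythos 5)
Your argument is correct, but it is not the paper's argument: what you give is the classical pushout proof of \cite{ARS}, carried out entirely inside $\Cab$, whereas the paper's proof is designed to stay inside its own machinery. Concretely, the paper takes a non-split injection $h:A\to X$, chooses an injective resolution $0\to X\to I\overset{j}{\to}J$ (this is exactly where the enough-injectives hypothesis is used), extends $h$ to a morphism $\tilde h: L_C \to [X\to I\overset{j}{\to}J]$ in $\A$, shows via Lemma \ref{epimono} \textbf{ii)} and the indecomposability of $A$ that $\Ker\tilde h\neq 0$, and then invokes the simplicity of $L_C$ (Proposition \ref{classification}) to conclude that the inclusion $\Ker\tilde h\to L_C$ is an isomorphism; its inverse (equivalently, the null-homotopy witnessing $\tilde h=0$ in $\A$) hands over the map $h':B\to X$ with $h'\circ g=h$. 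Your route buys elementarity and a slightly stronger statement: it never leaves $\Cab$, uses neither $L_C$ nor injective resolutions, and -- as you suspected -- the enough-injectives hypothesis is indeed superfluous for this corollary; moreover your argument treats an arbitrary $h$ that is not a split monomorphism, whereas the paper's written proof only deals with injective $h$. What the paper's route buys is precisely its theme: the factorization witnessing ``left almost split'' is realized as a null-homotopy, so that AR-theory is read off from simple objects in the heart $\A$, which is the point the corollary is meant to illustrate. Finally, the caveat you flag at the end is harmless here: since $\Cab$ is assumed in this section to be a finite length category, Fitting's lemma makes the paper's minimal-length formulation of right minimality equivalent to the statement that every endomorphism of $B$ over $C$ is an automorphism, and the paper itself uses exactly this equivalence in the proof of Proposition \ref{classification}, so your key step $f(tu)=f\Rightarrow tu\in\Aut(B)$ is legitimate.
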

\begin{proof} Recall that $A$ is indecomposable by the minimality
of $f$. Let $X \in \Ob(\Cab)$ and a non-split injection $h: A \to
X$ be given. We must prove that $h$ factors through $B$. We may
assume that $h$ is non-zero. Thus $h$ is a non-split map.

Next, by the assumption that $f$ is right almost split we know
that $L_C = [A \to B \overset{f}{\to} C]$ is a simple object of
$\A$. Let
$$
0 \to X \to I \overset{j}{\to} J
$$
be an injective resolution of $X$. Then there is a map $\tilde{h}:
L_C \to [X \to I \overset{j}{\to} J]$ such that $\tilde{h}_A = h$.
We claim that $\Ker \tilde{h} \neq 0$: indeed, if $\Ker \tilde{h}
= 0$ then by Lemma \ref{epimono} $A \to B \oplus X$ would be a
split injection and this is not the case since $A$ is
indecomposable and neither $A \to B$ nor $h$ is split.

Hence, by simpleness of $L_C$ we have that the natural map $nat
:\Ker \tilde{h} \to L_C$ is an isomorphism. One sees that the
inverse morphism $nat^{-1}$ provides a map $h': B \to X$ such that
$h' \circ g = h$. \end{proof}

To end this section let us say that an abelian category has enough
simples if each of its objects has a simple quotient object. A
noetherian category of course has enough simples. The following
example shows however that $\A$ will in general not be noetherian
although it (by Auslander and Reiten's theorem) has enough
simples. Better means that guarantee enough simples will be given
in the following sections.

\begin{example} Let $\FF$ be a field and let $R =
\FF[x,y]/(x^2,xy,y^2)$ and $\Cab = \Rmod$. Let $\m = (x,y)$ be the
maximal ideal in $R$.

For $i >0$ define $R$-modules $M_i = R/x-iy$ and $B_i = M_1 \oplus
M_2 \oplus \ldots \oplus M_i$. Let $\FF = R/\m$ and $B_i \to \FF$
the sum of the natural projections. Let $V_i = [Ker_i \to B_i \to
\FF]$, where $Ker_i = \Ker(B_i \to \FF)$. The inclusions $B_i
\hookrightarrow B_{i+1}$ induce surjections
\begin{equation}\label{refsurBi}
V_1 \twoheadrightarrow \ldots \twoheadrightarrow V_i
\twoheadrightarrow V_{i+1} \twoheadrightarrow \ldots
\end{equation}
 Since $\Hom_{\A}(P_{M_{i+1}},
V_i) \neq 0$ and  $\Hom_{\A}(P_{M_{i+1}}, V_{i+1}) = 0$ we
conclude that non of the maps in \ref{refsurBi} are isomorphisms
and hence that d.c.c. doesn't hold on quotient objects in $\A$, so
$\A$ is not noetherian.
\end{example}
\subsection{Serre duality for the category $\A$ in the case of representations of an artin algebra.}

Let $R$ be an artin algebra. Thus, by definition there is a
commutative artin ring $\F \subset R$ such that $R$ is a finitely
generated $\F$-module. From now on we shall exclusively consider
the case where $\Cab = \Rmod$.

Let $S$ be the direct sum of the irreducible $\F$-modules and let
$J$ be an injective hull of $S$ in $\kmod$. Let
$$
\kMod \ni M \mapsto M^*:= \Hom_\F( M ,J) \in \kMod
$$
be the usual duality functor. Thus ${}^{**} \vert_{\kmod} \cong
\Id_{\kmod}$.

In order to later on apply Watts' representability theorem
(\cite{R}, Theorem 3.36) we need to embed $\A$ into a category of
modules over a ring. Some technical difficulties arise from the
fact that $\A$ does not have a small projective generator (unless
$R$ has finite representation type). Let $\widetilde{\A} :=
\A(\RMod)$. Thus, $\widetilde{\A}$ is an abelian category closed
under coproducts containing $\A$ as a full abelian subcategory.
Since $\Ind(\Rmod)$ is a set we can define $\widetilde{P} :=
\oplus_{D \in \Ind{(\Rmod)}} P_D \in \widetilde{\A}$ and $F =
\End_{\w\A}(\w P)$.

Consider the exact functor
$$\V: \w\A \to \ModF, \, \V M = \Hom_{\w\A}(\w P, M)$$
and let $\VA : \A \to \modF$ be the restriction of $\V$ to $\A$.
\begin{lemma}\label{Morita lemma} $\VA$ is a fully faithful
embedding. The essential image of $\VA$ consists of all objects
$Y$ in $\text{mod-}F$ such that there exists a $C \in \Rmod$ and a
surjection $\V P_C \to Y$.
\end{lemma}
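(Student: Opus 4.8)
The plan is to read this as a Morita-type statement for the projective generator $\w P$ of $\w\A$, proving the full faithfulness and the description of the essential image separately.

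\emph{Full faithfulness.} For $C\in\Rmod$ the module $\V P_C=\Hom_{\w\A}(\w P,P_C)$ is a direct summand of $F=\V\w P$: writing $e_C$ for the idempotent of $F$ cutting out the summand $P_C$ of $\w P$, one has $\V P_C\cong e_C F$. Consequently, for every $N\in\w\A$ there is a natural isomorphism $\Hom_F(\V P_C,\V N)\cong(\V N)e_C\cong\Hom_{\w\A}(P_C,N)$, so $\V$ is fully faithful on the full subcategory of projectives $\{P_C:C\in\Rmod\}$. To extend this to an arbitrary $M=[A\to B\to C]\in\A$ I would use the canonical projective presentation $P_B\to P_C\to M\to 0$ extracted from the resolution \ref{standard resolution}. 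Since $\V$ is exact this yields $\V P_B\to\V P_C\to\V M\to 0$, and applying $\Hom_\A(-,N)$ and $\Hom_F(\V-,\V N)$ produces two left-exact sequences whose outer terms are identified by the projective case; the five lemma then gives $\Hom_\A(M,N)\xrightarrow{\sim}\Hom_F(\V M,\V N)$.

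\emph{Essential image, easy inclusion.} If $Y\cong\V M$ with $M=[A\to B\to C]\in\A$, the surjection $P_C\to M$ of \ref{standard resolution} is taken by the exact functor $\V$ to a surjection $\V P_C\to\V M=Y$; and $Y$ is finitely generated because $\V P_C\cong e_C F$ is cyclic over $F$.

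\emph{Essential image, hard inclusion.} Given $Y\in\modF$ together with a surjection $q:\V P_C\to Y$, set $K=\Ker q$. The idea is to realize $K$ as $\V M'$ for a subobject $M'\subseteq P_C$ in $\w\A$: let $M'$ be the image of the morphism $\coprod_{\kappa\in K}\w P\to P_C$ assembled from the maps $\kappa:\w P\to P_C$ belonging to $K$, and put $M:=P_C/M'$. Then $M$ is a quotient of $P_C$, hence lies in $\A$, and exactness of $\V$ gives $\V M=\V P_C/\V M'$ together with a canonical epimorphism $Y\twoheadrightarrow\V M$. Everything reduces to the equality $\V M'=K$, the inclusion $K\subseteq\V M'$ being immediate from the construction.

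\emph{The main obstacle.} The hard point is exactly $\V M'\subseteq K$, and this is where the absence of a \emph{small} projective generator bites. Each $P_D$ \emph{is} compact in $\w\A$ (as $D$ is finitely generated), so for $\psi\in\V M'$ the restriction $\psi|_{P_D}$ factors through finitely many of the $\kappa$, which forces every ``component'' $\psi e_D$ to lie in $K$. However $\w P=\coprod_{D\in\Ind(\Rmod)}P_D$ itself is not compact, so $\V$ does not commute with the infinite coproduct and $\V P_C=\prod_D\Hom_{\w\A}(P_D,P_C)$ is a genuine product in which $\psi$ is \emph{not} the (finite) sum of its components $\psi e_D$. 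Thus passing from ``$\psi e_D\in K$ for all $D$'' to ``$\psi\in K$'' is not formal; it must use that $Y$ is finitely generated and is a quotient of the single cyclic module $\V P_C$ (the condition in the statement), which is precisely what rules out infinitely supported elements surviving in the quotient. I expect this upgrade to be the delicate step of the proof, whereas the full faithfulness and the easy inclusion are routine.
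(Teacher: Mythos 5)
Your full faithfulness argument and your easy inclusion coincide with the paper's proof (the paper likewise realizes $\V P_C$ as a direct summand of $F=\V\w P$ and then uses a presentation $P_{C'}\to P_C\twoheadrightarrow M$ together with the five lemma), and your reduction of the hard inclusion is also the paper's: your $M'=\sum_{\kappa\in K}\Jm \kappa$ is exactly the subobject the paper calls $T$ (the index set in the paper's displayed formula for $T$ should evidently read $g\in\Ker\phi$), and everything does reduce to $\V M'\subseteq K$. But at precisely that point your proof stops: you diagnose the difficulty correctly ($\w P$ is not compact, $\V P_C$ is a product over $\Ind(\Rmod)$, and an element is not the finite sum of its components), and then only record the expectation that finite generation of $Y$ should resolve it. That is a genuine gap, and your guess about where the finiteness should come from points in the wrong direction: the paper's argument never uses finite generation of $Y$ (indeed any quotient of $\V P_C$ is automatically finitely generated).

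What closes the gap in the paper is a Krull--Schmidt argument applied to $T$ itself, combined with projectivity of $\w P$. Write $T=[B'\to B''\to B]$ with $B',B'',B\in\Rmod$, normalized so that no direct summand of $B$ is the isomorphic image of a direct summand of $B''$, and choose a surjection $f:\w P^I\to T$ in $\w\A$ all of whose components $f_i:\w P\to P_C$ lie in $K$ (possible, since $T$ is the sum of images of elements of $K$). By the cokernel formula \ref{CoKer2}, surjectivity of $f$ means that the natural map $B''\oplus Q^I\to B$ is a split surjection, where $Q$ is the degree-zero component of $\w P$. Since $B$ has finite length, decomposing it into indecomposables (which have local endomorphism rings) and using the normalization on $B''$ produces a \emph{finite} subset $J=\{1,\dots,n\}\subset I$ such that already $\w P^J\to B$ is a split surjection, i.e.\ $f_J:=f|_{\w P^J}:\w P^J\to T$ is an epimorphism in $\w\A$. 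Now, given any $g\in\V T$, projectivity of $\w P$ yields a factorization $g=f_J\circ h$ with $h=(h_1,\dots,h_n)$, $h_i\in F$, whence $g=\sum_{i=1}^n f_ih_i\in K$ because $K$ is a right $F$-submodule of $\V P_C$. So the correct finiteness input is the finite length of the end term of the subobject $T\subseteq P_C$, which lets one extract finitely many elements of $K$ through which all of $\V T$ factors over $F$; this factorization is exactly the ``upgrade'' your proposal leaves open, and it does not follow from the considerations about $Y$ that you sketch.
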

\begin{proof} Let us say
that an object $M$ in $\w \A$ is good if the natural map
$$
\Hom_{\w \A}(M,N) \to \Hom_{\ModF}( \V M , \V N)
$$
is bijective for all $N$ in $\A$. We must show that any object $M$
in $\A$ is good. First take $M = P_C$ for $C \in \Ind (\Rmod)$.
Since, $P_C$ is a direct summand in $\w P$ we get that $\V P_C$ is
a direct summand in the right $F$-module $F = \V \w P$. From this
it is clear that $P_C$ is good. Thus $P_C$ is good for any $C \in
\Rmod$. Since every object $M$ in $\A$ has a presentation $P_C \to
P_{C'} \twoheadrightarrow M$ we get by the five lemma that $M$ is
good. This proves the full faithfulness of $\V |_\A$.

Since any object $M$ in $\A$ is a quotient of some $P_C$ it
follows that $\V M$ is a quotient of $\V P_C$. Conversely, assume
that $Y \in \modF$ and there is a surjection $\phi: \V P_C \to Y$.
Let $T$ be the $\A$-subobject of $P_C$ defined by
$$
T = \sum_{g \in \V P_C} \Jm g
$$
We now prove $\Ker \phi = \V T$. Note that if $I$ is a
sufficiently large index set we get a surjection $f: {\w P}^I \to
T$ such that each component $f_i : \w P \to P_C$ is in $\Ker
\phi$. Let $T = [B' \to B'' \to B]$ for some $B', B'', B \in
\Rmod$. We may assume that $B$ has no direct summand which is the
isomorphic image of a direct summand of $B''$.

Since $f$ is surjective we have that the natural map $B'' \oplus
{\w P}^I {\to} B$ is a split surjection. If $B$ is indecomposable
the Krull Schmidt theorem shows that some component $f_j$ of $f$
is a split surjection. In general, after breaking $B$ into
indecomposable pieces we find that there is a finite subset $J
=\{1, \ldots , n\} \subset I$ such that $f_J := f |_{{\w P}^J} :
{\w P}^J \to B$ is a split surjection.

Since $\w P$ is projective in $\A$ we see that any map $g: \w P
\to T$ factors as $g = f_J \circ h$ where $h = (h_1, \ldots , h_n)
\in \Hom_{\w \A}(\w P, {\w P}^J) = \Hom_{\w \A}(\w P, {\w P})^n$.
Thus $g = \sum^n_{i=1} f_i h_i \in \Ker \phi$. Thus $\Ker \phi =
\V T$. Hence we have a short exact sequence
$$0 \to \V T \overset{\mu}{\to} \V P_C \to Y \to 0$$
By the full faithfulness already proved we see that $\mu = \V \nu$
for some $\nu \in \Hom_{\A}(T, P_C)$. Hence, $Y \cong \V(\Coker
\nu)$.
\end{proof}

\begin{proposition}\label{Serre duality} For any $X \in R\text{-\hbox{mod}}$, the contravariant functor
$$
\Hom_{{\A}}(P_X, \ )^*: \A \to \kmod
$$
is representable by an injective $\A$-object $S P_X$.
\end{proposition}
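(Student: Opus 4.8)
The plan is to recognise $G:=\Hom_{\A}(P_X,\ )^{*}$ as an exact contravariant functor $\A\to\kmod$ and to represent it, deducing injectivity for free. Indeed $\Hom_{\A}(P_X,\ )$ is exact because $P_X$ is projective (Corollary \ref{K equivalent to D}), and $(\ )^{*}=\Hom_{\F}(\ ,J)$ is an exact duality on $\kmod$; hence $G$ is exact. Therefore, as soon as I have produced an object $SP_X\in\A$ together with a natural isomorphism $\Hom_{\A}(\ ,SP_X)\cong G$, the functor $\Hom_{\A}(\ ,SP_X)$ is exact and so $SP_X$ is injective. Thus the entire statement reduces to representability.

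To represent $G$ I would transport the problem to modules through the Morita embedding $\V|_{\A}$ of Lemma \ref{Morita lemma}. Let $e=e^{2}\in F$ be the idempotent cutting out the summand $P_X$ of $\w P$, so that $\V P_X\cong eF$. Full faithfulness of $\V|_{\A}$ then gives $\Hom_{\A}(P_X,M)\cong(\V M)e$ and hence $G(M)\cong((\V M)e)^{*}$ for all $M\in\A$. Consider the contravariant functor $\w G:\ModF\to\kMod$, $\w G(Y)=(Ye)^{*}$. It is left exact and converts coproducts to products, since $(\ )e=\Hom_{F}(eF,\ )$ is exact and preserves coproducts ($eF$ being finitely generated projective) while $(\ )^{*}$ is exact and turns coproducts into products. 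Watts' representability theorem (\cite{R}) then yields $\w G\cong\Hom_{F}(\ ,Q)$ with $Q=\w G(F)=(Fe)^{*}$, so that $G(M)\cong\Hom_{F}(\V M,Q)$ for every $M\in\A$.

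The genuinely delicate point, and the one I expect to be the main obstacle, is to realise the representing object inside $\A$ itself. The difficulty is that $F$ is not a small ring unless $R$ has finite representation type, and $Q=(Fe)^{*}=\Hom_{\F}\!\big(\bigoplus_{D}\Hom_{R}(X,D),J\big)$ is not finitely generated, so it does not literally lie in the essential image of $\V|_{\A}$ described in Lemma \ref{Morita lemma}. This is exactly where the artinian hypothesis has to enter: each $\Hom_{R}(X,D)$ has finite $\F$-length and $\Rmod$ has finite injective hulls. I would exploit this by exhibiting a finite model of the representing object directly. Choosing a projective presentation $P_{1}\xrightarrow{\psi}P_{0}\to X\to 0$ in $\Rmod$, I set $SP_X:=[\Ker\nu\psi\to\nu P_{1}\xrightarrow{\nu\psi}\nu P_{0}]$, where $\nu(\ )=\Hom_{R}(\ ,R)^{*}$ is the Nakayama functor. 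Since $\nu P_{0},\nu P_{1}\in\Inj(\Rmod)$, Proposition \ref{structure of injectives} guarantees that $SP_X$ is injective in $\A$, and the essential-image description of Lemma \ref{Morita lemma} identifies $\V SP_X$ with the finitely generated quotient of $Q$ through which $\Hom_{F}(\V M,Q)$ factors for all $M\in\A$.

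Finally I would verify the isomorphism, first on projectives and then in general. From $\Hom_{R}(P_i,C)^{*}\cong\Hom_{R}(C,\nu P_i)$ one computes $\Hom_{\A}(P_C,SP_X)=\Coker\!\big(\Hom_{R}(C,\nu P_{1})\to\Hom_{R}(C,\nu P_{0})\big)\cong\Hom_{R}(X,C)^{*}=G(P_C)$, naturally in $C$. Both $G$ and $\Hom_{\A}(\ ,SP_X)$ are left exact contravariant functors, so applying them to a projective presentation $P_{C_{1}}\to P_{C_{0}}\to M\to 0$ and comparing kernels propagates the isomorphism from the $P_C$ to all of $\A$. The crux throughout remains the third step: matching the abstractly produced, and a priori enormous, representing $F$-module with an honest finite injective object of $\A$, which is precisely what forces one to invoke the finiteness built into the artin algebra together with the structure theorem for injectives of $\A$.
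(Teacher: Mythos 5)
Your proof is correct, but it follows a genuinely different route from the paper's. The paper works entirely inside the Morita picture: it applies Watts' theorem to $\Gamma=\Hom_{\modF}(\V P_X,\ )^*$, obtaining the representing right $F$-module $\Gamma(F)\cong\Hom_{\w\A}(P_X,\w P)^*$, and then the whole difficulty is to show that this module lies in the essential image of $\V\vert_{\A}$. That is done by d\'evissage: for $X$ simple one uses the injective hull $I$ of $X$, the decomposition $\soc D=X^m\oplus K$, and injectivity of $P_I$ to produce a surjection $(\V P_I)^n\to\Hom_{\w\A}(P_X,\w P)^*$; the general case follows from a filtration of $X$ by simples together with projectivity of the modules $\V P_{A_i}$. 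In particular, your assertion that $Q=(Fe)^*$ ``does not literally lie in the essential image'' of Lemma \ref{Morita lemma} is exactly what the paper refutes: $Q$ \emph{is} a quotient of some $\V P_A$ (the essential-image criterion concerns generation over $F$, not finiteness over $\F$), and proving this is the core of the paper's argument. You sidestep that core entirely by exhibiting the representing object explicitly as $SP_X=[\Ker\nu\psi\to\nu P_1\to\nu P_0]$ --- the classical construction of $D\operatorname{Tr}X$ via the Nakayama functor --- getting injectivity from Proposition \ref{structure of injectives}, verifying representability on the projectives $P_C$ through $\Hom_R(P_i,C)^*\cong\Hom_R(C,\nu P_i)$, and propagating to all of $\A$ by left exactness of both contravariant functors along projective presentations; that verification is complete and self-contained, so your Watts/idempotent paragraph is redundant scaffolding (and the sentence identifying $\V SP_X$ with a ``finitely generated quotient of $Q$'' is too vague to serve as a step; also $\V P_X\cong eF$ requires $X$ indecomposable, though replacing $(\ )e$ by $\Hom_F(\V P_X,\ )$ fixes this at no cost). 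The trade-off: your argument is shorter and concrete, but it imports the transpose/Nakayama machinery of classical AR-theory as an input, whereas the paper deliberately avoids it --- which is what allows the paper, in Proposition \ref{dualtranspose}, to \emph{deduce} the existence of the dual of the transpose and the identification $X'\cong D\operatorname{Tr}X$ as corollaries rather than assume them.
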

We shall refer to $SP_X$ as the Serre dual of $P_X$.
\begin{proof}
Consider the contravariant functor
$$\Gamma := \Hom_{\modF}(\V P_X, \ )^*: \modF \to \kMod
$$
Since the right $F$-module $\V P_X$ is a direct summand in $F$, if
$X$ is indecomposable, we conclude that $\V P_X$ is projective for
any $X$ in  $R\text{-mod}$. Thus $\Gamma$ is exact. In particular
$\Gamma$ is left exact and since, moreover, $\Gamma$ transforms
coproducts to products, Watts' theorem shows that $\Gamma$ is
represented by $\Gamma(F)$.

By Lemma \ref{Morita lemma} it remains to show that there is an
object $SP_X \in \A$ such that $\Gamma(F) = \V S P_X$. Note that
$$
\Gamma(F) \cong \Hom_{\w \A}(P_X,\w P)^*
$$
with the right $F$-module structure on $\Hom_{\w\A}(P_X,\w P)^*$
induced by the left $F$-module structure on $\Hom_{\w\A}(P_X,\w
P)$ that is given by composition of maps.

Assume first that $X$ is irreducible and let $I$ be an injective
hull of $X$ in $R\text{-mod}$. Let $\epsilon_1, \ldots ,
\epsilon_n$ generate $\Hom_{\A}(P_X,P_I)^*$ as a $\F$-module.
Since $P_X$ and $P_I$ are direct summands in $\w P$ we can
interpret $\End_\A(P_X)^*$ and $\Hom_{\A}(P_X,P_I)^*$ as direct
summands in $F^*$. With this in mind we record that
\begin{equation}\label{span property}
\End_{\A}(P_X)^* = \{\sum^n_{i=1} \epsilon_i g_i; \,  g_i \in
\Hom_\A(P_X, P_I)\}
\end{equation}

Similarly, we can consider the $\epsilon_i$'s as elements of
$\Hom_{\w\A}(P_X,\w P)^*$ and get the map
$$
\pi: (\V P_I)^n \to \Hom_{\w\A}(P_X,\w P)^*, \, (f_1, \ldots ,f_n)
\mapsto \sum \epsilon_i f_i
$$
Note that $\pi$ is right $F$-linear. We shall prove that $\pi$ is
surjective. Let
$$\nu = \prod \nu_D \in \Hom_{\w \A}(P_X,\w P)^* = \prod_{D \in
\Ind (\Rmod)} \Hom_\A(P_X, P_D)^*
$$
be given.

Fix for now $D \in \Ind (R\text{-mod})$. We can write $\soc D =
X^m \oplus K$ where $K$ is a direct sum of simple modules all of
them non-isomorphic to $X$, where $\soc D$ is the socle of $D$.
Thus, since $X$ is simple, $\Hom_\A(P_X, P_D) \cong \Hom_\A(P_X,
P_{X^m})$ and so we have an isomorphism
$$
nat: \Hom_\A(P_X, P_D)^* \cong \Hom_\A(P_X, P_{X^m})^* =
(\Hom_\A(P_X, P_{X})^*)^m
$$
By \ref{span property} we can find $h_{D,1}, \ldots h_{D,n} \in
\Hom_\A(P_{X}, P_{I})^m$ such that $nat(\nu_D) = \sum^n_{i=1}
\epsilon_i h_{D,i}$. Here we used the notation $h_{D,i} =
(h_{D,i1}, \ldots , h_{D,im})$ and $\epsilon_i h_{D,i} =
(\epsilon_i h_{D,i1}, \ldots, \epsilon_i  h_{D,im})$.

Since $P_I$ is injective we can find $\tilde{h}_{D,i} \in
\Hom_\A(P_{D}, P_{I})^m$ that extends $h_{D,i}$. Thus ${\nu_D} =
\sum^n_{i=1} \epsilon_i \tilde{h}_{D,i}$. If we let $h_D =
(\tilde{h}_{D,1}, \ldots , \tilde{h}_{D,n})$ and $h = \prod_{D \in
\Ind (R\text{-mod})} h_D$ we see that $\pi(h) = \nu$.

For $X \in \Rmod$ simple we have now constructed a surjection $\V
P_A \to \Hom_{\w \A}(P_X, \w P)^*$, with $A = I^n$. If $X$ is not
simple, it has a finite filtration with simple subquotients $X_1 ,
\ldots , X_N$. By the procedure above we find $A_i \in \Rmod$ and
surjections $\V P_{A_i} \to  \Hom(P_{X_i}, \w P)^*$. Since each
$\V P_{A_i}$ is projective in $\modF$ this give rise to a
surjection
$$
\oplus^N_{i=1} \V P_{A_i} \to \Hom_{\w \A}(P_X , \w P)^*
$$
Thus, by Lemma \ref{Morita lemma} there is an object $SP_X \in \A$
such that $\Hom_{\w \A}(P_X , \w P)^* \cong \V SP_X$.
\end{proof}
Note that the assignement $P_X \mapsto SP_X$ defines a fully
faithful functor $\Proj(\A) \to \Inj(\A)$, because for $P_X,P_Y
\in \Proj(\A)$ we have isomorphisms
$$
\Hom_\A(P_X,P_Y) \to \Hom_\A(P_X,P_Y)^{**} \to \Hom_\A(P_Y,
SP_X)^*\to
$$
$$
\Hom_\A(SP_X, SP_Y)
$$
Let us explicitly describe $Sf: SP_X \to SP_Y$ corresponding to $f
\in \Hom_\A(P_X, P_Y)$. $f$ induces a morphism
\begin{equation}\label{explicit}
\Hom_\A(f,\w P)^*: \V SP_X \cong \Hom_\A(P_X, \w P)^* \to
\Hom_\A(P_Y, \w P)^* \cong  \V SP_X
\end{equation}
Now the full faithfulness of $\V |_\A$ gives a morphism $Sf: SP_X
\to SP_Y$ such that $\V Sf = \Hom_\A(f,\w P)^*$.

For exactness reasons there cannot exist a Serre dual object $SA
\in \A$ of a non-projective object $A \in \A$. But we have
\begin{proposition}\label{Serre dual on Kb}
The functor $S: \Proj(\A) \to \Inj(\A)$ induces a triangulated
functor $\Ss: \DA \to \DA$ satisfying
$$
\Hom_{\DA}(A,B)^* \cong \Hom_{\DA}(B,\Ss A)
$$
for all $A,B \in \DA$.
\end{proposition}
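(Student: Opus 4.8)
The plan is to build $\Ss$ by transporting the additive functor $S$ across the two triangulated equivalences that identify $\DA$ with categories of complexes. By Corollary \ref{K equivalent to D} we have $\DA \simeq \Kb(\Proj(\A))$, and since $\A$ has enough injectives (Corollary \ref{enough injectives}) and homological dimension $\le 2$, the dual statement gives $\DA \simeq \Kb(\Inj(\A))$. First I would define $\Ss$ as the composite
$$\DA \simeq \Kb(\Proj(\A)) \xrightarrow{\;S\;} \Kb(\Inj(\A)) \simeq \DA,$$
where the middle arrow applies $S$ termwise to a bounded complex of projectives $P^\bullet$, producing the complex $SP^\bullet$ of injectives. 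Because $S$ is additive it carries homotopies to homotopies, so this is well defined on $\Kb$; and because termwise application of an additive functor commutes with the shift and identifies $S(M(f))$ with $M(Sf)$ for the mapping cone $M(f)$ of \ref{Mf}, the resulting $\Ss$ is triangulated.

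It remains to establish the duality isomorphism. I would regard both sides as functors of $A$, with $B$ as a parameter. The assignment $A \mapsto \Hom_{\DA}(A,B)^*$ is a covariant cohomological functor $\DA \to \kmod$: indeed $\Hom_{\DA}(-,B)$ is contravariant cohomological and ${}^* = \Hom_\F(-,J)$ is an exact contravariant duality, so their composite sends distinguished triangles to long exact sequences. Likewise $A \mapsto \Hom_{\DA}(B,\Ss A)$ is covariant cohomological, since $\Ss$ is triangulated and $\Hom_{\DA}(B,-)$ is cohomological. The strategy is then to produce a natural transformation between these two functors, check that it is invertible on generators, and extend by d\'evissage.

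On the generators the isomorphism is exactly Proposition \ref{Serre duality}. For $A = P_X$ projective and $B \in \A$ there are no higher maps, so $\Hom_{\DA}(P_X,B) = \Hom_\A(P_X,B)$, and since $SP_X$ is injective, $\Hom_{\DA}(B,\Ss P_X) = \Hom_\A(B, SP_X)$; the representing isomorphism of Proposition \ref{Serre duality} gives
$$\Hom_\A(P_X,B)^* \;\cong\; \Hom_\A(B, SP_X),$$
natural in $B$ by representability and natural in $X$ by the explicit functoriality of $S$ recorded in \ref{explicit}. To upgrade this to all of $\DA$ I would represent $A$ and $B$ by bounded complexes of projectives $P^\bullet$ and $Q^\bullet$ and assemble the termwise isomorphisms $\Hom_\A(P^i,Q^j)^* \cong \Hom_\A(Q^j, SP^i)$ into an isomorphism of total $\Hom$-complexes; applying $H^0$ and using the exactness of ${}^*$ (so that it commutes with cohomology) then yields $\Hom_{\DA}(A,B)^* \cong \Hom_{\DA}(B,\Ss A)$. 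Equivalently, one invokes the five lemma: the class of $A$ on which the natural transformation is invertible contains every $P_X$, is stable under shifts and cones, and hence is all of $\DA$; the same d\'evissage in the variable $B$ reduces the parameter to objects of $\A$, where Proposition \ref{Serre duality} applies.

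The hard part will be verifying that the termwise representability isomorphisms are compatible with the two differentials of the $\Hom$-bicomplex, i.e. that they genuinely assemble into a map of complexes rather than merely matching objects degree by degree. Compatibility with the differential induced by $d_Q$ is the naturality of Proposition \ref{Serre duality} in its $\A$-variable, while compatibility with the differential induced by $d_P$ is the naturality in $X$, which is precisely the content of the identity $\V Sf = \Hom_\A(f,\w P)^*$ defining $S$ on morphisms in \ref{explicit}. The only remaining care is the usual sign bookkeeping that appears because dualizing a total complex reverses its differential.
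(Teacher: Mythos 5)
Your proposal is correct and follows essentially the same route as the paper: extend $S$ termwise to get a triangulated functor on $\Kb(\Proj(\A)) \cong \DA$, assemble the representability isomorphisms $\Hom_\A(P^i,Q^j)^* \cong \Hom_\A(Q^j,SP^i)$ (using \ref{explicit} for naturality in the projective variable) into an isomorphism of homomorphism complexes $\HHom(A,B)^* \cong \HHom(B,\Ss A)$, and take $H^0$ using exactness of ${}^*$ together with the fact that the source is a complex of projectives and $\Ss A$ a complex of injectives. The d\'evissage/five-lemma variant you sketch is a valid alternative packaging, but your primary Hom-complex argument is exactly the paper's proof.
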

\begin{proof} We have that $S$ extends to a functor
$$
\Ss: \Cb(\Proj(\A)) \to \Cb(\Inj(\A))
$$
Clearly, this functor induces a triangulated functor between
homotopy categories that we denote by the same symbol
$$
\Ss: \Kb(\Proj(\A)) = \DA \to \Kb(\Inj(\A)) = \DA
$$
For $A,B \in \operatorname{C}^b(\A)$ there is the homomorphism
complex $\HHom(A,B)$ defined by
$$
\HHom(A,B)^n = \prod_{i \in \Z}\Hom_\A(A_i,B_{i+n}), \text{ for }
n \in \Z
$$
and differential given by $df = d_B \circ f - (-1)^n f \circ d_A$
for $f \in \HHom(A,B)^n$. Using \ref{explicit} it is easy to
verify that for $A, B \in \operatorname{C}^b(\Proj(\A))$, the
already constructed isomorphisms $\Hom_\A(A_i,B_j)^* \cong
\Hom_\A(B_j,SA_i)$, $\forall i,j \in \Z,$ defines an isomorphism
of homomorphism complexes
$$
\HHom(A,B)^* \cong \HHom(B,\Ss A)
$$
Then we get $\Hom_{\DA}(A,B)^* = H^0(\HHom(A,B)^*)$, since $A \in
\operatorname{C}^b(\Proj(\A))$, and $\Hom_{\DA}(B,\Ss A) =
H^0(\HHom(B,\Ss A))$, since $\Ss A \in
\operatorname{C}^b(\Inj(\A))$. This establishes the isomorphism
stated in the theorem.
\end{proof}
Let us remark that using the $t$-structure it is easy to see that,
conversely, Proposition \ref{Serre dual on Kb} implies Proposition
\ref{Serre duality}. The result of Proposition \ref{Serre dual on
Kb} will not be used in this paper.
\subsection{Existence of AR-sequences for
representations of an artin algebra}

We now approach the existence problem of AR-sequences. First, it
is better to work with the category $\A^0$, since the
indecomposable projectives of $\A^0$ correspond to non-projectives
of $\Rmod$ and AR-sequences must have non-projective end term.
Write
$$SP^0_X :=
q(SP_X),$$ where $q$ is the functor \ref{functor q}. It follows
from Proposition \ref{Serre duality} and Lemma \ref{projs and injs
in A0} that
\begin{cor}\label{AL corr in good language} For $V \in \A^0$ we have a natural isomorphism $$\Hom_{\A^0}(P^0_X, V )^* \cong \Hom_{\A^0}(V ,
S P^0_X).$$
\end{cor}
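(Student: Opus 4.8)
The plan is to reduce the asserted formula to Proposition \ref{Serre duality} by transporting both sides from $\A^0$ into the ambient category $\A$, using the two comparison isomorphisms that are already available: the identification of $\Hom_{\A^0}$-groups with $\Hom_\A$-groups from Lemma \ref{projs and injs in A0}, and the adjunction defining the functor $q$ of \ref{functor q}. Concretely, I expect both sides of the statement to be naturally isomorphic to $\Hom_\A(V, SP_X)$, so the whole argument is a chain of three natural isomorphisms.

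First I would rewrite the left-hand side. By Lemma \ref{projs and injs in A0} there is a natural isomorphism $\Hom_{\A^0}(P^0_X, V) \cong \Hom_\A(P_X, V)$, where on the right $V$ is viewed as an object of $\A$ via the inclusion $\A^0 \hookrightarrow \A$. Applying the contravariant duality $(-)^*$ then gives $\Hom_{\A^0}(P^0_X, V)^* \cong \Hom_\A(P_X, V)^*$. Next I would invoke Proposition \ref{Serre duality}, which supplies $SP_X \in \Inj(\A)$ together with a natural isomorphism $\Hom_\A(P_X, N)^* \cong \Hom_\A(N, SP_X)$ for all $N \in \A$; setting $N = V$ yields $\Hom_\A(P_X, V)^* \cong \Hom_\A(V, SP_X)$.

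To handle the right-hand side I would use that $SP^0_X = q(SP_X)$ by definition, and that $q$ is the right adjoint to the inclusion $\iota : \A^0 \hookrightarrow \A$ (\ref{functor q}). The adjunction then gives $\Hom_{\A^0}(V, SP^0_X) = \Hom_{\A^0}(V, q(SP_X)) \cong \Hom_\A(\iota V, SP_X) = \Hom_\A(V, SP_X)$ for $V \in \A^0$. Composing these three isomorphisms produces the claimed isomorphism $\Hom_{\A^0}(P^0_X, V)^* \cong \Hom_{\A^0}(V, SP^0_X)$.

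Since each link in the chain is an adjunction or a previously established representability isomorphism, I anticipate no genuine analytic difficulty; the only step demanding care is the bookkeeping of \emph{naturality and variance}. One must verify that all three isomorphisms are natural in the \emph{same} variable $V$, noting that both sides are contravariant functors of $V$ (applying $(-)^*$ to the covariant functor $\Hom_{\A^0}(P^0_X, -)$ reverses variance so as to match $\Hom_{\A^0}(-, SP^0_X)$), and that the comparison isomorphism of Lemma \ref{projs and injs in A0} and the unit/counit of the adjunction $(\iota, q)$ are compatible with the duality isomorphism of Proposition \ref{Serre duality}. This is routine, but it is the sole point that the write-up should make explicit.
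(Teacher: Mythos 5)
Your proposal is correct and is essentially the paper's own argument: the paper deduces the corollary directly from Proposition \ref{Serre duality} and Lemma \ref{projs and injs in A0} without spelling out the steps, and your chain of isomorphisms (transfer the left-hand side into $\A$ via the lemma, apply Serre duality there, then bring the right-hand side back via the adjunction between the inclusion $\A^0 \hookrightarrow \A$ and the functor $q$ of \ref{functor q}, which is the content of the lemma's statement about $I^0_X$) is exactly that deduction made explicit.
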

It seems to be an appropriate way (which also generalizes well) to
think of AR-duality like this, as a Serre type of duality between
$\Proj(\A)$ and $\Inj(\A)$ (or between $\Proj(\A^0)$ and
$\Inj(\A^0)$). We can now deduce Auslander and Reiten's famous
existence theorem and in addition give a rather explicit form for
the AR-sequence with given end term.

\begin{theorem}\label{theorem main} Let $X \in \Ind(\Rmod)$ be non-projective. There exist a non-zero map $\tau :
P^0_X \to S P^0_X$ which has the property that any non-surjective
map $g: P^0_X \to \Jm \tau$ must satisfy $g = 0$. Then $\Jm \tau$
is an AR-sequence with end term $X$.
\end{theorem}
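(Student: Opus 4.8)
The plan is to construct $\tau$ as a socle element for the Auslander--Reiten duality of Corollary \ref{AL corr in good language} and then to recognise $\Jm\tau$ as the unique simple quotient of $P_X$. First I would note that, by Lemma \ref{projs and injs in A0}, $P^0_X$ is an indecomposable projective of $\A^0$ (since $X$ is indecomposable and non-projective), so $E := \End_{\A^0}(P^0_X)$ is a local ring. Applying Corollary \ref{AL corr in good language} with $V = P^0_X$ gives a natural isomorphism $\Hom_{\A^0}(P^0_X, SP^0_X) \cong E^*$. A short naturality check identifies precomposition by $E$ on the left with the right $E$-module structure on $E^*$ that is dual to the left regular module ${}_E E$; as $E$ is local, the right socle of $E^*$ is $(E/\operatorname{rad} E)^*\neq 0$. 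I would pick a non-zero $\tau$ in this socle, so that $\tau\circ r = 0$ for every non-invertible $r\in E$. Dually, $SP^0_X = q(SP_X)$ is an indecomposable injective object of $\A^0$ (the right adjoint $q$ of \ref{functor q} preserves injectives and $S$ is fully faithful by the remark after Proposition \ref{Serre duality}), so $\End_{\A^0}(SP^0_X)$ is local and the full bimodule socle of $E^*$ gives in addition $s\circ\tau = 0$ for every non-invertible $s\in\End_{\A^0}(SP^0_X)$.

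Next I would verify the stated vanishing property. Write $\tau = \iota\circ p$ with $p\colon P^0_X\twoheadrightarrow \Jm\tau$ an epimorphism and $\iota\colon \Jm\tau\hookrightarrow SP^0_X$ a monomorphism. Given a non-surjective $g\colon P^0_X\to \Jm\tau$, projectivity of $P^0_X$ in $\A^0$ lets me lift it to $\tilde g\in E$ with $p\tilde g = g$; since $g$ is not surjective neither is $\tilde g$, so $\tilde g$ is not an isomorphism and hence $\tilde g\in\operatorname{rad}E$. Then $\iota g = \tau\tilde g = 0$, and as $\iota$ is monic, $g = 0$. In particular $\tau\neq 0$ forces $\Jm\tau\neq 0$.

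It then remains to identify $\Jm\tau$ as an AR-sequence. The canonical resolution \ref{standard resolution} of $P^0_X = [\,\Omega X \to Q \to X\,]$ furnishes a surjection $P_X\twoheadrightarrow P^0_X$, so $\Jm\tau$ is a non-zero quotient of the projective object $P_X$. Since $\End_\A(P_X)\cong \End_\Cab(X)$ is local, $P_X$ has the simple top $L_X$, and therefore \emph{every} non-zero quotient of $P_X$ — in particular $\Jm\tau$ — has $L_X$ as its (simple) top. Once I know that $\Jm\tau$ is actually simple, it must coincide with the unique simple quotient $L_X$ of $P_X$ (Proposition \ref{classification}); its defining epimorphism $B'\to X$ is then right almost split with end term $X$, and Corollary \ref{leftsplit} upgrades the short exact sequence $\Jm\tau$ to an AR-sequence, as claimed.

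The hard part is exactly this simplicity statement: ruling out that $\Jm\tau$ is a proper extension whose top is $L_X$. Here I would import the socle argument of \cite{ARS}. Dually to the computation above, the condition $s\circ\tau = 0$ shows that every non-injective map $\Jm\tau\to SP^0_X$ vanishes, so $SP^0_X$ is the injective envelope of the simple socle $\soc\Jm\tau$. Feeding the two vanishing conditions into the multiplicity identities $\dim\Hom_{\A^0}(P^0_X,-) = [\,-:L_X\,]$ and $\dim\Hom_{\A^0}(-,SP^0_X) = [\,-:\soc SP^0_X\,]$ shows that $L_X$ and $\soc SP^0_X$ each occur exactly once in $\Jm\tau$; together with the control of the remaining composition factors afforded by the pullback description \ref{Im2} of morphisms, this forces $\Jm\tau$ to have length one. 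Equivalently one can bypass simplicity and check directly that the end-term map of $\Jm\tau$ is right almost split: a non-split $h\colon Z\to X$ yields, via the pullback of $\Jm\tau$ along $h$, a non-surjective morphism $P^0_X\to\Jm\tau$, which the vanishing property kills and thereby produces the required factorisation of $h$. I expect the genuine obstacles to be the bimodule bookkeeping in the first step and, above all, this final length (equivalently, almost-split) verification, which is where the real content of the existence theorem lies.
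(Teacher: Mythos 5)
Your first two steps coincide with the paper's: the construction of $\tau$ as a socle element of $\Hom_{\A^0}(P^0_X, SP^0_X) \cong \End_{\A^0}(P^0_X)^*$, and the verification of the vanishing property by lifting $g$ through the epimorphism $P^0_X \twoheadrightarrow \Jm\tau$ and killing the (non-unit) lift with $\tau$. The gap is in the part you yourself flag as the hard one: proving that $\Jm\tau$ is simple. First, your assertion that $P_X$ has a simple top $L_X$ because $\End_{\A}(P_X) \cong \End_{\Rmod}(X)$ is local is circular: by Proposition \ref{classification}, the existence of a simple quotient of $P_X$ \emph{is} the existence of a right almost split map ending in $X$, i.e.\ the theorem being proved; in a non-noetherian abelian category (and the paper exhibits $\A$ as non-noetherian) a projective with local endomorphism ring need not have any simple quotient at all. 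Second, the multiplicity identities $\dim\Hom_{\A^0}(P^0_X,-) = [\,-:L_X\,]$ and their dual presuppose that objects of $\A^0$ have finite length with composition factors of the prescribed form; this fails — objects of $\A^0$ have finite length only when $R$ has finite representation type, so the ``length one'' argument imported from \cite{ARS} does not transfer to $\A^0$. Third, your alternative ``direct check'' does not parse: the pullback of $\Jm\tau$ along a non-split $h: Z \to X$ produces a morphism \emph{into} $\Jm\tau$ from an object $V$ whose end term is $Z$, not a morphism $P^0_X \to \Jm\tau$, so the vanishing property cannot be applied to it as stated.

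The missing idea is a \emph{second}, object-varying application of AR-duality, applied to subobjects of $\Jm\tau$ rather than to $P^0_X$. The paper proves simplicity by showing that for \emph{every} $D$ and every non-surjective $h: P^0_D \to \Jm\tau$ one has $h = 0$ (this suffices, since the $P^0_D$ generate $\A^0$, so a proper non-zero subobject would receive a non-zero map from some $P^0_D$). The chain is: $h = 0 \iff \Hom_{\A^0}(\Jm h, SP^0_X) = 0$ (because $\Jm h \hookrightarrow \Jm\tau \hookrightarrow SP^0_X$ is monic, a non-zero $\Jm h$ gives a non-zero element of this group) $\iff \Hom_{\A^0}(P^0_X, \Jm h) = 0$, by Corollary \ref{AL corr in good language} applied to $V = \Jm h$; and the last group vanishes because any $g: P^0_X \to \Jm h$, composed with the non-surjective inclusion $\Jm h \hookrightarrow \Jm\tau$, is a non-surjective map $P^0_X \to \Jm\tau$, hence zero by the defining property of $\tau$, hence $g = 0$. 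Your proposal only ever invokes the duality with $V = P^0_X$ (and, in invalid finite-length disguise, $V = \Jm\tau$); without the application to the varying subobjects $\Jm h$ there is no way to pass from ``non-surjective maps out of $P^0_X$ vanish'' to ``$\Jm\tau$ has no proper non-zero subobjects.'' Once simplicity is in place, the conclusion is as you and the paper both say: $\Jm\tau = [X' \to I \times_N X \to X]$ by \ref{Im2}, Proposition \ref{classification} makes the end map right almost split, and Corollary \ref{leftsplit} upgrades the sequence to an AR-sequence.
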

\begin{proof} We first show the existence of a non-zero map $\tau: P^0_X \to S
P^0_X$ such that $\tau \circ f = 0$ for all non-units $f$ in the
local artin algebra $\End_{\A^0}(P^0_X) = \End_{\Rmod}(X)$. Since
$X$ is non-projective we have $P^0_X \neq 0$ and thus by
AR-duality
$$\Hom_{\A^0}(P^0_X , S P^0_X) \cong \End_{\A^0}(P^0_X)^*  \neq 0$$
Now $\Hom_{\A^0}(P^0_X , S P^0_X)$ is a finitely generated
$\F$-module and therefor finitely generated as a right module over
the ring $\End_{\A^0}(P^0_X) \cong \End_{\Rmod}(X) \supset \F$. We
can thus find a nonzero element $\tau$ in the socle of
$\Hom_{\A^0}(P^0_X , S P^0_X)$ considered as an
$\End_{\A^0}(P^0_X)$-module.

This $\tau$ will satisfy the hypothesis of the theorem because any
map $g: P^0_X \to \Jm \tau$ will factor as $g = \tau \circ f$, for
some $f \in \End_{\A^0}(P^0_X)$, by the projectivity of $P^0_X$.
If $g$ is non-surjective it is clear that $f$ can not be a unit.
Thus $g = 0$ in this case.

We now prove that $\Jm \tau$ is simple. For this purpose it is
enough to prove that for any $D \in \RMod$ and any non-surjective
map $h: P^0_D \to \Jm \tau$ we must have $h = 0$. We have
$$
h = 0 \iff \Hom_{\A^0}(\Jm h, SP^0_X) = 0
\overset{\hbox{AR-duality}}{\Longleftrightarrow}
\Hom_{\A^0}(P^0_X, \Jm h) = 0
$$
Let $g \in \Hom_\A(P^0_X, \Jm h)$. Let $i: \Jm h \to \Jm \tau$ be
the inclusion. Since $i$ is not surjective $i \circ g: P^0_X \to
\Jm \tau$ is not surjective and hence $0$ by the assumption on
$\tau$. Thus, $g = 0$.

Finally we observe that $\Jm \tau$ is an AR-sequence that ends
with $X$: Since $SP_X$ is injective in $\A$ we see that $S P^0_X =
[X' \to I \to N]$ where $I$ is injective in $\Rmod$. Then we have
$ \Jm \tau = [X' \to I \times_N X {\to} X]$
\end{proof}

To end this section let us deduce the classical formulation of
Auslander and Reiten duality involving the dual of the transpose.
Let $X \in \Rmod$. Since $S P_X$ is injective we see that
\begin{equation}\label{Xprime}
SP_X \cong [X' \to I \overset{d}{\to} J]
\end{equation}
where $X' \in \Rmod$ and $I,J \in \Inj(\Rmod)$. We assume that
$X'$ is chosen to be minimal in the sense that it has no direct
summand $X''$ such that the composition $X'' \to X' \to I$ splits;
then $X'$ is well-defined up to isomorphism.

Then it is easy to see that for any $V \in \A^0$ one has
$$\Hom_{\A}(V, SP_X) = \Hom_{\KR}(V,X'[2]).$$
On the other hand (for any $V \in \A$) we have
$$\Hom_{\A}(P_X,V) = \Hom_{\KR}(X[0],V).$$
Thus we have rediscovered
\begin{proposition}\label{dualtranspose} (AR-duality.) There is a natural isomorphism $$\Hom_{\KR}(X[0],V)^*
\cong \Hom_{\KR}(V,X'[2])$$ of $\F$-modules, for $V \in \A^0$, $X
\in \Rmod$ and $X'$ defined by \ref{Xprime}.
\end{proposition}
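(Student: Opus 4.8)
The plan is to read the stated isomorphism as a concatenation of three identifications, all natural in $V$, of which only one carries real content. First, since $P_X = [0\to 0\to X] = X[0]$ and both $P_X$ and $V$ lie in the heart $\A$, the general fact that $\Hom$ between two objects of the heart of a $t$-structure is computed in the ambient triangulated category gives $\Hom_{\A}(P_X,V) = \Hom_{\KR}(X[0],V)$, and hence $\Hom_{\KR}(X[0],V)^{*} = \Hom_{\A}(P_X,V)^{*}$. Second, Proposition \ref{Serre duality} asserts precisely that $\Hom_{\A}(P_X,\ )^{*}$ is represented by $SP_X$, so $\Hom_{\A}(P_X,V)^{*}\cong\Hom_{\A}(V,SP_X)$ naturally in $V$. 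Third --- the only nonformal step --- I would prove $\Hom_{\A}(V,SP_X)\cong\Hom_{\KR}(V,X'[2])$ for $V\in\A^0$. Composing the three yields the proposition.

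For the third identification, write $SP_X=[X'\overset{g}{\to} I\overset{d}{\to} J]$ as in \ref{Xprime}; being an object of the heart, $0\to X'\to I\to J$ is exact, and by Corollary \ref{structure of injectives cor} we have $I,J\in\Inj(\Rmod)$. The subcomplex $[I\to J]$ sitting in degrees $-1,0$ and the quotient $X'[2]$ assemble into a short exact sequence of complexes
$$0\to [I\to J]\to SP_X\overset{\psi}{\to} X'[2]\to 0$$
which is split in each degree and therefore a distinguished triangle in $\KR$. Applying $\Hom_{\KR}(V,\ )$ and using once more that the heart is full in $\KR$, it suffices to show the two flanking groups $\Hom_{\KR}(V,[I\to J])$ and $\Hom_{\KR}(V,[I\to J][1])$ vanish; then the long exact sequence forces $\psi_{*}$ to be an isomorphism.

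These two vanishings are the main, though elementary, obstacle, and they are where the hypotheses enter. Write $V=[A\overset{a}{\to} B\overset{b}{\to} C]$, which is a short exact sequence because $V\in\A^0$. A map of complexes $V\to[I\to J]$ is a pair $(\beta\colon B\to I,\ \gamma\colon C\to J)$ with $\beta a=0$ and $d\beta=\gamma b$; since $b=\Coker a$, the relation $\beta a=0$ lets me factor $\beta=h b$ for some $h\colon C\to I$, whereupon $d\beta=\gamma b$ together with $b$ epi forces $\gamma=dh$, so $(\beta,\gamma)$ is null-homotopic. Likewise a map $V\to[I\to J][1]$ is a pair $(\alpha\colon A\to I,\ \beta\colon B\to J)$ subject to a single compatibility relation; injectivity of $I$ extends $\alpha$ along the monomorphism $a$ to some $h\colon B\to I$ with $ha=\alpha$, and then the compatibility relation forces $\beta+dh$ to vanish on $A$, hence to factor through $b=\Coker a$, producing a null-homotopy. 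Both flanking groups thus vanish.

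I expect the only delicate points to be the bookkeeping of homotopies and of the shift sign in the second vanishing; conceptually everything reduces to exactness of $V$ (to factor maps through the cokernel $C$) and injectivity of $I$ (to extend maps along $A\hookrightarrow B$). Naturality is automatic: $\psi_{*}$ is induced by the fixed morphism $\psi$, the first identification is a bare equality of $\F$-modules, and the Serre-duality isomorphism is natural by Proposition \ref{Serre duality}. Hence the composite is a natural isomorphism of $\F$-modules, as claimed.
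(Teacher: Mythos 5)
Your proposal is correct and takes essentially the same route as the paper: the paper's own proof is exactly the concatenation of $\Hom_{\A}(P_X,V)=\Hom_{\KR}(X[0],V)$, Serre duality (Proposition \ref{Serre duality}), and the identification $\Hom_{\A}(V,SP_X)=\Hom_{\KR}(V,X'[2])$ for $V\in\A^0$. The only difference is that the paper dismisses this last identification as ``easy to see'', while you actually prove it (via the degreewise split triangle $[I\to J]\to SP_X\to X'[2]$ and the two homotopy-vanishing computations, which check out, including the use of exactness of $V$ and injectivity of $I$), so you have filled in a detail rather than taken a different approach.
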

In \cite{ARS} it is proved that the formula in Proposition
\ref{dualtranspose} holds with $X'$ replaced by the dual of the
transpose of $X$, $D Tr X$. Thus we have proved that $X' \cong D
Tr X$ and also reestablished the existence of the dual of the
transpose.

\section{Generalized AR-sequences}\label{Generalized
AR-sequences} Here we propose a generalization of AR-sequences to
the case of non-abelian categories. Let $\Cab$ be an additive
Karoubi closed category. Let $D^{\leq 0}$ be the subcategory of
$\KC$ defined by \ref{leq0}. Let $\tau^{\leq 0}:
\KC \to  D^{\leq 0}$ be the functor given by
\ref{form1}. Let $D^{>0}$ be the collection of $M \in \KC$
such that there is an $M' \in \KC$ such that $M$ is homotopic to
the cone of the canonical morhism $\tau^{\leq 0} M' \to M'$. We make the rather
mild assumption (see \cite{Ay}, Proposition 2.1.70, for criteria that this
holds) that
\begin{equation}\label{our generalized t struc}
(D^{\leq 0}, D^{\geq 0})
\end{equation} is a $t$-structure. This $t$-structure is standard in the same sense as before;
note that it is now a harder problem than in the abelian case to
describe all possible standard $t$-structures on $\KC$ (compare
Section \ref{other standards}). Then we define a \emph{generalized
AR-sequence} to be a simple object in the abelian category $\A :=
D^{\leq 0} \cap D^{\geq 0}$.

Note that if $\Cab$ is abelian to start with a generalized
AR-sequence will simply be a usual AR-sequence. We haven't worked
out the details, but generalized AR-sequences will certainly be
closely related to higher AR-sequences, whose definition we for
the sake of completeness recall here:

A \emph{higher AR-sequence}, see \cite{I}, in a suitable additive
category $\Cab$, is a long exact sequence
$$
0 \to X^{-n} \overset{d^{-n}}{\to} X^{-n+1}
\overset{d^{-n+1}}{\to} \ldots \to X^0 \to 0
$$
such that each $d^{-i}$ belongs to the radical of $\Cab$, $X^{-n}$
and $X^0$ are indecomposable and the sequence
$$
0 \to \Hom_{\Cab}(A,X^{-n}) \to \ldots \to \Hom_{\Cab}(A,X^{1})
\to J_{A,X^0} \to 0
$$
is exact for all $A \in \Ob(\Cab)$, where $J_{A,X^0}$ is the
radical of  $\Hom_{\Cab}(A,X^{0})$. (When $n=2$ this gives usual
AR-sequences.)
\subsection{Motivation from representation theory: Category
$\BGG$.}\label{Soergel section} Let $\g$ be  a semi-simple complex
Lie algebra and let $R$ be the cohomology ring of the flag
manifold of $\g$. Let $\BGG_0$ be the principal block of the
Bernstein-Gelfand-Gelfand category $\BGG$ of representations of
$\g$. Then by Soergel's theory, \cite{S}, $\Proj(\BGG_0)$ is
equivalent to an additive Karoubi closed subcategory $\Cab$ of
$R$-mod. ($\Cab$ is not abelian unless $\g = \mathfrak{sl}_2$.)
Thus,
$$\operatorname{D}^b(\BGG_0) \cong \Kb(\Proj(\BGG_0)) \cong \KC.$$
Hence the tautological $t$-structure on $\operatorname{D}^b(\BGG_0)$
corresponds to the $t$-structure on $\KC$ which is given by
\ref{our generalized t struc}.

If $\g = \mathfrak{sl}_2$, then $R = \C[x]/(x^2)$, $\Cab = \Rmod$
and the standard $t$-structure on $\operatorname{D}^b(\BGG_0)$
corresponds to the $t$-structure from Proposition \ref{p1} on
$\KC$; hence the category $\A$ is equivalent to $\BGG_0$ in this
case. The usual duality on $\BGG$ is also obtained by a general
construction that we give in the next section.

In fact, this procedure may be generalized as follows. Start, say,
with an additive Karoubi closed subcategory $\Cab$ of the category
of all modules over a Frobenius algebra $R$ and consider the heart
$\A$ of a $t$-structure on $\KC$ of the form \ref{our generalized
t struc}. One may then ask interesting questions such as: If we
assume that $R$ is the cohomology ring $H^*(X)$, for some compact
manifold $X$, when can the heart, like category $\BGG$, then be
realized as a category of perverse sheaves on $X$? When is the
heart Koszul, etc? (Compare with \cite{BGS}.)

\subsection{Duality over a Frobenius algebra}\label{final remarks}
Let $R$ be a commutative Frobenius algebra. Then the classes of
injective and projective $R$-modules coincide and the duality
functor
$$\Rmod \to \Rmod, \ M \mapsto M^* := \Hom_{\Rmod}(M,R)$$ fixes the
projective modules. In this case we can define a duality functor
${}^*$ on $\A = \A(\Rmod)$ as follows.

First we define the dual $(P_C)^*$ of $P_C$ for $C \in \Rmod$.
Pick an injective resolution $0 \to C \to I \overset{d}{\to} I'$
and define
$$
(\Pp_C)^* = [\Ker d^* \to I'^* \overset{d^*}{\to} I^*]
$$
$(\Pp_C)^*$ is a well-defined object in $\A$ since different
injective resolutions of the same object are homotopic. Next, for
a general $\A$-object $X = [A \to B \overset{f}{\to} C]$ we define
$$
X^* = \Ker ((\Pp_C)^* \overset{f'}{\to} (\Pp_B)^*)
$$
where the map $f'$ is naturally induced by $f$. Then some diagram
chasing proves that $X \mapsto X^*$ gives a well defined
contravariant functor ${}^*:\A \to \A$ whose square is equivalent
to the identity.
\begin{example}\label{2 the coinvariants} In the notations of example \ref{the coinvariants} we
have $P^*_{\FF} \cong [\FF \to R \to R]$, while  $P_{\FF}$, $P_R$
and $[0 \to \FF \to R]$ are selfdual.
\end{example}
\smallskip

\noindent

\end{document}